\documentclass[11pt]{amsart}

\usepackage{amsmath,amssymb,amsthm,mathtools}
\usepackage{enumitem}
\usepackage{microtype}
\usepackage[hidelinks]{hyperref}

\newtheorem{theorem}{Theorem}[section]
\newtheorem{proposition}[theorem]{Proposition}
\newtheorem{lemma}[theorem]{Lemma}
\newtheorem{corollary}[theorem]{Corollary}
\theoremstyle{remark}
\newtheorem{remark}[theorem]{Remark}

\newcommand{\R}{\mathbb R}
\newcommand{\Z}{\mathbb Z}
\newcommand{\SO}{\mathrm{SO}}
\newcommand{\OO}{\mathrm O}
\newcommand{\GH}{\mathrm{GH}}
\newcommand{\diam}{\operatorname{diam}}
\newcommand{\Vol}{\operatorname{Vol}}
\newcommand{\Ric}{\operatorname{Ric}}
\newcommand{\Hol}{\operatorname{Hol}}
\newcommand{\eps}{\varepsilon}

\title[Singular Limits of Frame Bundles]{Singular Limits of Frame Bundles of Noncollapsing Positively Curved Manifolds}

\author[Chao Qian]{Chao Qian}
\address{School of Mathematics and Statistics, Beijing Institute of Technology, Beijing 100081, P.R. China}
\email{6120150035@bit.edu.cn}
\author[Wilderich Tuschmann]{Wilderich Tuschmann}
\address{Karlsruhe Institute of Technology, Karlsruhe 76131, Germany}
\email{wilderich.tuschmann@kit.edu}
\thanks{The project is partially supported by the NSFC (No. 12371048).}
\makeatletter
\@ifundefined{subjclassname@2020}{%
  \@namedef{subjclassname@2020}{\textup{2020} Mathematics Subject Classification}}{}
\makeatother
\subjclass[2020]{53C23, 53C21, 53C05}
\keywords{Gromov--Hausdorff convergence, frame bundle, positive sectional curvature, noncollapse, holonomy, suspension}
\date{}

\begin{document}

\begin{abstract}
For each dimension $n\ge 2$, we construct noncollapsing sequences of
metrics on $S^n$ with a uniform positive lower bound on sectional curvature
whose oriented orthonormal frame bundles, equipped with the canonically
induced connection metrics, converge to nonmanifold metric spaces
homeomorphic to \(\Sigma(S^{n-1}\times\SO(n))\).
In even dimensions, real projective quotients give connected full frame
bundles with the same limits, disproving the manifold conclusion in
Yamaguchi's frame bundle conjecture. In odd dimensions, each component of
the full projective frame bundle, with its intrinsic metric, converges to
the quotient of the spherical limit by a free involution. The singularities are detected by local homology, and
we obtain quantitative equivariant Gromov--Hausdorff estimates. We also
compare the frame bundle limits with the intrinsic completions of their
regular limiting frame metrics, and present and discuss alternatives to Yamaguchi's original conjecture.
\end{abstract}

\maketitle

\section{Introduction}

Perelman's Stability Theorem (see \cite{Kap}) and Yamaguchi's Fibration Theorem \cite{Yam}
are cornerstones of the theory of convergence and collapse under a lower bound on sectional curvature.
Yamaguchi's work also led to the conjectural picture that Gromov--Hausdorff limits $Y$ of naturally
metrized orthonormal frame bundles of Gromov--Hausdorff-convergent
sequences $M_i \to X$ of Riemannian $n$-manifolds $M_i$ with a uniform lower bound on sectional
curvature and a uniform upper bound on diameter should be topological manifolds (see  \cite[p. 323]{Yam}).
For connected full frame bundles, equivariant convergence yields the
metric quotient representation $X\cong Y/\OO(n)$ independently of the
conjectural manifold conclusion. The additional assertion that $Y$ is a
manifold would constrain the possible singularities of $X$.
With an additional uniform upper bound on sectional curvature, one enters
the setting of Cheeger--Fukaya--Gromov theory \cite{CFG}. Its regularity
statements for frame-bundle constructions must be interpreted with the
appropriate smoothing or curvature-derivative hypotheses \cite{Fuk,PT}.
In particular, these statements do not directly establish
$C^{1,\alpha}$ regularity for limits of the unsmoothed canonical connection
metrics used here (see also \cite[Remark~1.5(2)]{SX}).

In the present note, we show that the manifold conclusion for $Y$ fails for the natural connection metrics on the frame bundles,
even for noncollapsing sequences of Riemannian manifolds with a
uniform positive lower bound on sectional curvature.


Let $M = (M^n,g)$ be a Riemannian manifold of dimension $n$ and
let $FM=F(M^n,g)$ denote its $\OO(n)$ principal bundle of orthonormal frames.
If \(M\)
is oriented, let \(F^+(M,g)\subset FM\) denote the bundle of
positively oriented orthonormal frames, a principal \(\SO(n)\) bundle.


After fixing
a bi-invariant metric on $\OO(n)$, the Levi--Civita connection of $g$ gives rise
to a canonical metric on $FM$, for which the projection $FM \to M$ is
a Riemannian submersion and $O(n)$ acts on $FM$ by isometries.
Throughout the paper, we equip the frame bundle $\OO(n)\to F(M)\to M$ of
\((M,g)\) with the Riemannian metric
\begin{equation}\label{eq:connection-metric}
 h_g=\pi^*g+b(\omega_g,\omega_g),
 \qquad b(U,V)=-\frac12\operatorname{tr}(UV),
\end{equation}
canonically induced from $g$. Here \(\omega_g\) is the Levi--Civita connection form,
and $b$ is the fixed $\operatorname{Ad}(\OO(n))$-invariant inner product
on $\mathfrak{so}(n)$ inducing the bi-invariant fiber metric.
Thus horizontal lifts have the same length as their projections, while
each connected component of a frame fiber retains its fixed intrinsic
Riemannian metric. We always use the Levi--Civita connection of the
corresponding base metric, with a fixed vertical normalization and without
separately smoothing the horizontal distribution.
When the full frame bundle is disconnected, we use the intrinsic length
metric on each component separately; the Riemannian metric alone does
not specify a finite distance between different components.

To describe our results in detail, let \(X_n\) be the metric completion of
\[
 X_n^\circ=(0,\pi)\times S^{n-1}
\]
equipped with the ``spindle'' metric
\[
 g_0=dr^2+\frac14\sin^2r\,g_{S^{n-1}}.
\]
Thus \(X_n\) is obtained by adjoining one point at each end and is
homeomorphic to the suspension \(\Sigma S^{n-1}\cong S^n\).
Using rotationally symmetric metrics, we then obtain a noncollapsing sequence of positively curved metrics on $S^n$ which has a frame-bundle limit with genuine topological singularities, even though the base limit remains a topological sphere $S^n$. The metrics on $S^n$ converge to the spindle $X_n$ with two conical metric singularities at its poles. The smooth approximating metrics round off these singularities in shrinking neighborhoods of the poles. More concretely, define the polar caps by the geodesic balls of radius $\eps$ about the north and south poles, given in radial coordinates by $r\le\eps$ and $\pi-r\le\eps$ respectively. The curvature becomes unbounded in these caps as $\eps\to 0$. The key mechanism we employ is short-loop holonomy: at either pole, every element of \(\mathrm{SO}(n)\) is realized by parallel transport around a piecewise smooth loop based at that pole, contained in the corresponding cap, and of length at most \(C_n\varepsilon\). The horizontal lift of such a loop joins two frames in the same fiber and has the same length as the base loop. Consequently, although a frame fiber does not shrink intrinsically, its ambient diameter in the frame bundle tends to
zero.

This technique is consistent both with Sol\'orzano's ``wane group''
framework for Sasaki-type vector bundles \cite{Sol} and with the
infinitesimal-holonomy phenomena studied by Si--Xu \cite{SX}.
The new
features here are compactness, positive lower bounds on sectional curvature,
noncollapse, explicit global limits in every dimension, local homology
obstructions, and quantitative convergence.

Set
\[
 L_n=S^{n-1}\times\SO(n),
 \qquad
 m=n+\dim\SO(n)=\frac{n(n+1)}2.
\]
Thus \(m\) is the dimension of an oriented orthonormal frame bundle
over an \(n\)-manifold (and of the regular part of the limiting frame space).
The precise statement of
our main result is the following.

\begin{theorem}[Spherical examples]\label{thm:sphere}
For every \(n\ge2\), there are smooth antipodally invariant metrics
\(\widehat g_\eps\) on \(S^n\), \(0<\eps<1/4\), such that
\[
 \sec_{\widehat g_\eps}\ge\frac1\pi,
 \qquad
 \diam(S^n,\widehat g_\eps)=\pi,
 \qquad
 \Vol(S^n,\widehat g_\eps)
 \ge 2^{-(n-1)}\Vol(S^n,g_{\mathrm{round}}).
\]

As $\eps\to 0$, these manifolds converge to \(X_n\) in the
Gromov--Hausdorff topology, quantitatively:
\[
 d_{\GH}\bigl((S^n,\widehat g_\eps),X_n\bigr)
 \le C\sqrt\eps.
\]
This convergence may be realized by Gromov--Hausdorff approximations which are equivariant with
respect to the antipodal involutions.

Equip the positively oriented frame bundles
\[
 P_\eps=F^+(S^n,\widehat g_\eps)
\]
with the Levi--Civita connection metric
\[
 h_\eps
 =\pi_\eps^*\widehat g_\eps
  +b(\omega_\eps,\omega_\eps),
 \qquad
 b(U,V)=-\frac12\operatorname{tr}(UV).
\]
Then there exist a compact length space \(Y_n\) and a constant \(C_n\)
such that
\[
 d_{\GH}\bigl((P_\eps,h_\eps),Y_n\bigr)
 \le C_n\sqrt\eps.
\]
Topologically,
\[
 Y_n\cong\Sigma L_n
 =\Sigma\bigl(S^{n-1}\times\SO(n)\bigr).
\]
The convergence may be realized by Gromov--Hausdorff approximation maps
from \(P_\eps\) to \(Y_n\) that are equivariant under the right
\(\SO(n)\) action. For odd \(n\), these maps may also be chosen
equivariant under the natural lift of the antipodal involution.

The regular part of \(Y_n\) is a smooth \(m\)-manifold.  Its two
suspension vertices \(v_N,v_S\) are its only nonmanifold points, and
at either vertex \(v\),

\begin{equation}
\begin{aligned}\label{eq:main-homology}
 H_n(Y_n,Y_n\setminus\{v\};\Z)
   &\cong
   \Z\oplus H_{n-1}(\SO(n);\Z),\\
 H_m(Y_n,Y_n\setminus\{v\};\Z)
   &\cong\Z.
 \end{aligned}
\end{equation}

The right \(\SO(n)\) action extends to \(Y_n\) by fixing
\(v_N\) and \(v_S\).  The projection
\[
 (r,u,Q)\longmapsto(r,u)
\]
on the regular part, with \(v_N,v_S\) sent to the corresponding
poles of \(X_n\), induces an isometry
\[
 Y_n/\SO(n)\xrightarrow{\;\cong\;}X_n,
\]
where \(Y_n/\SO(n)\) is equipped with its metric quotient distance.
\end{theorem}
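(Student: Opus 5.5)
We first build the base metrics, then describe the limit frame space, and finally prove the convergence and the topological statements.

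\emph{Base metrics.} Write \(\widehat g_\eps=dr^2+f_\eps(r)^2g_{S^{n-1}}\) on \((0,\pi)\times S^{n-1}\). The warping function is \(f_\eps=f_\eps(\pi-r)\), so the metric is antipodally invariant. We take \(f_\eps(r)=\tfrac12\sin r\) for \(r\in[2\eps,\pi-2\eps]\). Near \(0\) we choose \(f_\eps\) concave with \(f_\eps(r)=\sin(r)/(c\eps)\cdot c'\eps\) type rounding: \(f_\eps(r)\approx\sin r\) for \(r\le\eps\), and we interpolate so that \(f_\eps'\) decreases from \(1\) to about \(\tfrac12\cos r\). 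The requirements are \(-f''/f\ge 1/\pi\) and \((1-f'^2)/f^2\ge1/\pi\). Both hold because \(f''\le0\), \(|f'|\le1\) and \(f\le\sin r\); this is the routine ODE step. We also need \(f_\eps\ge\tfrac12\sin r\), which gives the volume bound. The diameter equals \(\pi\) since meridians from pole to pole are minimizing and the radial coordinate ranges over \([0,\pi]\). For the GH estimate we compare with \(X_n\) via the identity in \((r,u)\). Distances change only through the caps, where the metrics differ by \(O(\eps)\) in size. Controlling the comparison of geodesics that pass near a pole costs \(O(\sqrt\eps)\), by a triangle-inequality/law-of-cosines estimate. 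The identity map commutes with \(r\mapsto\pi-r\), \(u\mapsto-u\), so the approximation is equivariant.

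\emph{Frame-bundle limit.} On the regular region \((0,\pi)\times S^{n-1}\), the Levi--Civita connection of \(g_0\) trivializes \(F^+\) as \((0,\pi)\times S^{n-1}\times\SO(n)\) away from the caps, via radial parallel transport. This gives a smooth limit metric \(h_0\) on \(L_n\times(0,\pi)\). We define \(Y_n\) as the metric completion of this space, and we expect it to add exactly one point at each end. The key input is the short-loop holonomy lemma described in the introduction. Every element of \(\SO(n)\) is the holonomy of a loop in the \(\eps\)-cap of length \(\le C_n\eps\). Hence every \(\pi_\eps\)-fiber over a point with \(r\le\eps\) has \(h_\eps\)-diameter \(\le C_n\eps\), and so does the entire preimage of the cap. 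On the limit side, the region \(r<\delta\) of \((Y_n^\circ,h_0)\) has diameter \(O(\delta)\). This uses the same holonomy argument for the cone \(g_0\): the cone angle deficit gives nontrivial holonomy, and its closure under loops at varying radii generates \(\SO(n)\). Once this is known, the completion is \(\Sigma L_n\) topologically.

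\emph{GH approximation.} We map \(P_\eps\to Y_n\) by the trivialization on \(r\in[\sqrt\eps,\pi-\sqrt\eps]\) and send the remaining parts to the vertices. The holonomy along radial segments is identical for both metrics outside the \(2\eps\)-caps. The horizontal and vertical structures therefore agree there, and the map is an isometry away from the caps. Paths entering the caps can be rerouted through the vertex neighborhoods at cost \(O(\sqrt\eps)\), which gives the \(C_n\sqrt\eps\) bound. Right \(\SO(n)\)-equivariance holds because the trivialization uses parallel transport. For odd \(n\) the antipodal map preserves orientation, its differential lifts to \(P_\eps\), and it commutes with the construction.

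\emph{Topology.} The regular part is smooth of dimension \(m\). The local homology at a vertex of \(\Sigma L\) is \(H_k(\Sigma L,\Sigma L\setminus v)\cong \tilde H_{k-1}(L)\). By Künneth, \(H_{n-1}(S^{n-1}\times\SO(n))\cong\Z\oplus H_{n-1}(\SO(n))\), and \(H_{m-1}(L)\cong\Z\) since \(L\) is a closed oriented \((m-1)\)-manifold. The vertex is not a manifold point: for \(n\ge2\) we have \(n<m\), so \(H_n\) is nonzero in a degree below \(m\).

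The \(\SO(n)\) action fixes the vertices, and the quotient map is a submersion on the regular part that is isometric on horizontals. Its metric quotient is therefore \(X_n\).

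The main obstacle is the claim that the completion adds only one point per end. This requires showing that the \(h_0\)-diameter of \(\{r<\delta\}\) tends to \(0\). I would approach it through explicit loops in small cones realizing generators of \(\SO(n)\), with quantitative length bounds.
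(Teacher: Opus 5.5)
\textbf{There is a genuine gap at the central step.} You define $Y_n$ as the metric completion of $(P^\circ,h_0)$. You expect it to add one point at each end, on the grounds that the cone angle of $g_0$ produces holonomy whose ``closure under loops at varying radii'' is all of $\SO(n)$ via short loops. For $n=2$ this is false.

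In the trivialization \eqref{eq:trivialization} the limiting connection form is $(\tfrac12\cos r-1)J\,d\theta$, with $J$ the generator of $\SO(2)$. Since $\SO(2)$ is abelian, a loop of $g_0$-length $\ell$ in $\{0<r<\delta\}$ with winding number $k$ has holonomy angle
\[
\oint(1-\tfrac12\cos r)\,d\theta = k\pi+O(\delta\ell).
\]
So short loops near the pole only produce holonomy close to $\{\pm I\}$. The frame circles near the pole therefore do not shrink to points in $d^\circ$. Indeed, the completion is the smooth lens space $S^3(2)/\Z_4$, which adds two closed geodesics of length $\pi$ (Proposition~\ref{prop:surface-completion}). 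That space is a manifold, not $\Sigma T^2$. It is also not the GH limit of $P_\eps$, whose polar frame caps have diameter $O(\eps)$ by Lemma~\ref{lem:cap}.

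The collapse in $P_\eps$ is caused by holonomy created inside the shrinking smoothing caps. There $f_\eps'$ sweeps from $1$ down to about $\tfrac12$, so the latitude holonomy $2\pi(1-f_\eps')$ sweeps through $[0,\pi]$. This is invisible to $h_0$, so no argument about the limiting cone alone can produce it.

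\textbf{The fix is to collapse the ends by fiat rather than by completion.} Define $d_Y=\min\{d^\circ(x,y),\,r_x+r_y,\,2\pi-r_x-r_y\}$ as in \eqref{eq:limit-distance}, and check directly that this is a compact length metric with the suspension topology. Then compare with $P_\eps$ by contracting polar caps on both sides (Lemma~\ref{lem:contract}). Use Lemma~\ref{lem:cap} on the $P_\eps$ side and the trivial $2\delta$ bound on the $Y_n$ side.

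For $n\ge3$ your completion route can be rescued, but it needs a real argument: full holonomy of the angular connection $D^{1/2}$ via Ambrose--Singer, plus a net argument for uniform collapse of the levels, as in Proposition~\ref{prop:higher-completion}. The theorem does not need this.

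\textbf{Two smaller points.}
\begin{itemize}
\item Your base profile is inconsistent as described. Suppose $f_\eps\approx\sin r$ on $[0,\eps]$ and $f_\eps$ is concave with $f_\eps'(2\eps)=\tfrac12\cos 2\eps$. Then $f_\eps(2\eps)\ge f_\eps(\eps)+\tfrac\eps2\cos2\eps\approx\tfrac32\eps$, whereas $\tfrac12\sin 2\eps\approx\eps$. The paper instead takes $f_\eps=\tfrac12\sin r+c_\eps$ on the middle annulus. Then $f_\eps'=f_0'$, so the connection forms coincide exactly there. The metrics are only $(1+C\eps/\delta)$-bi-Lipschitz, and choosing $\delta=\sqrt\eps$ is where the $\sqrt\eps$ rate comes from.
\item Concavity $f''\le0$ does not give $K_{\rm rad}=-f''/f\ge 1/\pi$; any linear stretch of the profile has $K_{\rm rad}=0$. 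You need the quantitative concavity $f''+f/\pi\le 0$. The paper then obtains $K_{\rm tan}\ge1/\pi$ from monotonicity of $(f')^2+f^2/\pi$.
\end{itemize}
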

\begin{remark}
For reference, the class motivating Yamaguchi's original conjecture is
\begin{equation}\label{eq:yamaguchi-class}
 \sec(M_i)\ge-1,
 \qquad \diam(M_i)\le D.
\end{equation}
After a fixed rescaling, however, our examples lie in a considerably smaller class: they can actually be chosen with \(\sec\ge1\), any prescribed upper bound on diameter, and a uniform positive lower bound on volume;
compare Corollary~\ref{cor:normalization}. Moreover, the curvature operators of the metrics \(\widehat g_\varepsilon\) constructed in Theorem~\(\ref{thm:sphere}\) actually also admit a uniform positive lower bound on their eigenvalues.
\end{remark}

Antipodal invariance of the metrics \(\widehat g_\eps\) also gives
the following.

\begin{theorem}[Projective space examples]\label{thm:projective}
Let \(g_\eps\) be the metric induced from \(\widehat g_\eps\)
on \(\mathbb{RP}^n\).
If \(n\) is even, the connected full frame bundle
\(F(\mathbb{RP}^n,g_\eps)\) is isometric to
\(F^+(S^n,\widehat g_\eps)\), and it converges to \(Y_n\)
in the Gromov--Hausdorff topology as $\eps\to 0$.
If \(n\) is odd, the full frame bundle has two components equipped
separately with their intrinsic component metrics, and
for $\eps\to 0$ each component converges to
\[
 Z_n=Y_n/\langle\tau\rangle,
\]
where \(\tau\) is a free isometric involution exchanging the two
vertices. The space \(Z_n\) has exactly one nonmanifold point, and
the manifolds \((\mathbb{RP}^n,g_\eps)\) converge in the Gromov--Hausdorff topology to \(Z_n/\SO(n)\).
\end{theorem}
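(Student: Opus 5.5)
We derive Theorem~\ref{thm:projective} from Theorem~\ref{thm:sphere} by passing to quotients under the antipodal map $A$ and its lift. Since $\widehat g_\eps$ is $A$-invariant, $g_\eps$ is well defined and $(S^n,\widehat g_\eps)\to(\mathbb{RP}^n,g_\eps)$ is a Riemannian double cover. The differential $dA$ sends orthonormal frames to orthonormal frames. It therefore induces an isometric involution $\widehat A$ of $(F(S^n),h_\eps)$ commuting with the right $\OO(n)$ action, since $A$ preserves the Levi--Civita connection and hence $\omega_\eps$. Consequently $F(\mathbb{RP}^n,g_\eps)=F(S^n,\widehat g_\eps)/\langle\widehat A\rangle$ isometrically, with the local connection metric descending.

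\emph{Even $n$.} Here $A$ reverses orientation, so $\widehat A$ exchanges $F^+(S^n)$ and $F^-(S^n)$. Each $\widehat A$-orbit thus meets $F^+$ in exactly one point. The composite $F^+(S^n)\hookrightarrow F(S^n)\to F(\mathbb{RP}^n)$ is therefore a bijective local isometry between connected Riemannian manifolds, hence an isometry of length spaces. Consequently $F(\mathbb{RP}^n,g_\eps)$ is connected and converges to $Y_n$ with the same rate $C_n\sqrt\eps$ as in Theorem~\ref{thm:sphere}.

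\emph{Odd $n$.} Now $A$ preserves orientation, so $\widehat A$ restricts to a free isometric involution of $P_\eps=F^+(S^n)$, and likewise of $F^-$. The two components of $F(\mathbb{RP}^n)$ are $P_\eps/\widehat A$ and $F^-/\widehat A$. They are isometric to each other via right multiplication by a reflection in $\OO(n)$, which commutes with $\widehat A$. By Theorem~\ref{thm:sphere}, the approximations $f_\eps\colon P_\eps\to Y_n$ can be chosen equivariant for $\widehat A$ and an isometric involution $\tau$ of $Y_n$. Here $\tau$ is the limit action, which on the regular part is $(r,u,Q)\mapsto(\pi-r,-u,\sigma(u)Q)$ for the appropriate frame identification.

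We then use the standard fact that an equivariant $\delta$-GH approximation between spaces with isometric $\Z/2$ actions induces a $C\delta$-approximation of the metric quotients. Since $\widehat A$ is free and isometric on the compact manifold $P_\eps$, the quotient metric on $P_\eps/\widehat A$ coincides with the intrinsic Riemannian metric of the component. This gives $P_\eps/\widehat A\to Z_n=Y_n/\langle\tau\rangle$.

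For the properties of $Z_n$: $\tau$ is free on the regular part because $A$ is free, and $\tau$ swaps $v_N$ and $v_S$ because $r\mapsto\pi-r$. Hence $\tau$ is free, and $Y_n\to Z_n$ is a 2-sheeted covering, a local homeomorphism. It follows that $Z_n$ is a manifold away from the image of $\{v_N,v_S\}$, which is a single point. At that point the local homology equals that of $Y_n$ at $v$, computed in \eqref{eq:main-homology}, so it is not a manifold point.

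Finally, for the base convergence: $(\mathbb{RP}^n,g_\eps)=(S^n,\widehat g_\eps)/A$ converges to $X_n/A$ by the antipodally equivariant approximations of Theorem~\ref{thm:sphere}. Moreover, $X_n/A=(Y_n/\SO(n))/\tau=(Y_n/\tau)/\SO(n)=Z_n/\SO(n)$, because the actions of $\tau$ and $\SO(n)$ commute and the isometry $Y_n/\SO(n)\cong X_n$ intertwines $\tau$ with $A$.

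The main obstacle is verifying that the equivariant approximations of Theorem~\ref{thm:sphere} yield approximations of the quotients with the correct quotient versus intrinsic metrics. For a free isometric involution on a length space, the quotient pseudometric $\min(d(x,y),d(x,\tau y))$ is already a length metric, so this step should be routine. A secondary concern is confirming that $\tau$ has no fixed points on the vertices.
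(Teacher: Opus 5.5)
Your proposal is correct and follows essentially the same route as the paper. For even $n$, the covering differential identifies $F^+(S^n,\widehat g_\eps)$ with $F(\mathbb{RP}^n,g_\eps)$. For odd $n$, you pass the $\widehat A$-equivariant cap-collapse approximations to the quotient by the free involution $\tau(r,u,Q)=(\pi-r,-u,J_uQ)$, read off the single singular point from the two-sheeted covering $Y_n\to Z_n$, and use that $\tau$ commutes with the right $\SO(n)$ action to identify the base limit. Your explicit checks fill in steps the paper leaves implicit: equivariant approximations descend to the quotients via $\min\{d(x,y),d(x,\sigma y)\}$, and the quotient metric of a free isometric involution is the intrinsic component metric.
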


\section{The rotation-invariant base metrics}

Choose a smooth nonincreasing function
\(\chi:[0,\infty)\to[0,1]\) satisfying
\[
 \chi\equiv1\quad\text{on }[0,1/2],
 \qquad
 \chi\equiv0\quad\text{on }[1,\infty).
\]
Put \(a=1/2\). For \(0\le r\le\pi/2\), define
\begin{equation}\label{eq:profile}
 f_\eps(r)
 =a\sin r+(1-a)\eps\int_0^{r/\eps}\chi(t)\,dt,
\end{equation}
and extend \(f_\eps\) by reflection by setting \(f_\eps(\pi-r)=f_\eps(r)\). Set
\begin{equation}\label{eq:base-metric}
 \widehat g_\eps=dr^2+f_\eps(r)^2g_{S^{n-1}}.
\end{equation}
The limiting profile on the open cylinder is
\[
 f_0(r)=a\sin r=\frac12\sin r.
\]
On \(\eps\le r\le\pi/2\),
\begin{equation}\label{eq:constant}
 c_\eps=(1-a)\eps\int_0^1\chi(t)\,dt,
 \qquad 0<c_\eps\le\eps/2,
\end{equation}
and
\begin{equation}\label{eq:middle-profile}
 f_\eps=f_0+c_\eps,
 \qquad f_\eps'=f_0'.
\end{equation}

\begin{proposition}\label{prop:base-geometry}
The metric \(\widehat g_\eps\) extends smoothly over both poles and
across the equator. It is antipodally invariant and satisfies the curvature,
diameter, and volume estimates in Theorem~\ref{thm:sphere}.
\end{proposition}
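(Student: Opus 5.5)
The plan is to verify the properties of the warped product $dr^2+f^2g_{S^{n-1}}$ one at a time, using the standard formulas for its sectional curvatures. These are the radial curvature $-f''/f$ on planes containing $\partial_r$, and the tangential curvature $(1-f'^2)/f^2$ on planes tangent to the spheres. Mixed planes have curvature between these two values, since the curvature operator is diagonal in the adapted frame; the same diagonal form gives the curvature-operator remark.

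\emph{Smoothness.} Near $r=0$ we have $\chi\equiv1$ on $[0,\eps/2]$, so there $f_\eps(r)=a\sin r+(1-a)r$. This is an odd function of $r$ with $f_\eps'(0)=a+(1-a)=1$, which is exactly the criterion for a rotationally symmetric metric to extend smoothly over the pole. Near $r=\pi/2$ we have $f_\eps=\tfrac12\sin r+c_\eps$ by \eqref{eq:middle-profile}. This function is even about $\pi/2$, so the reflected extension is smooth across the equator. The extended profile satisfies $f_\eps(\pi-r)=f_\eps(r)$, and the antipodal map is $(r,u)\mapsto(\pi-r,-u)$; hence the metric is antipodally invariant, and the reflection also gives smoothness at the south pole.

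\emph{Curvature.} For the middle region $\eps\le r\le\pi/2$, the radial curvature is $\tfrac12\sin r/(\tfrac12\sin r+c_\eps)$. This is not uniformly bounded below near $r=\eps$ (it is roughly $\eps/(\eps+c_\eps)\cdot$const), so a more careful bound is needed. Since $c_\eps\le\eps/2$ and $\sin r\ge 2r/\pi\ge 2\eps/\pi$, we get $\sin r/(\sin r+2c_\eps)\ge (2\eps/\pi)/(2\eps/\pi+\eps)=2/(2+\pi)$. Checking this against $1/\pi$: $2/(2+\pi)\approx0.389>0.318$, so the bound holds. The tangential curvature is $(1-\tfrac14\cos^2r)/f^2$, and since $f\le 1/2+\eps/2<1$ and the numerator is at least $3/4$, it is at least $3/4$. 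In the cap $r\le\eps$ we have $f'=a\cos r+(1-a)\chi(r/\eps)\in[0,1]$ and $-f''=a\sin r-(1-a)\chi'(r/\eps)/\eps\ge a\sin r\ge0$, because $\chi$ is nonincreasing; also $f\le r$. So the radial curvature is at least $a\sin r/f\ge \tfrac12\sin r/r\ge 1/\pi$. The tangential curvature is $(1-f'^2)/f^2$, and $1-f'\ge 1-a\cos r-(1-a)=a(1-\cos r)$ together with $f\le r$ gives roughly $(1-f'^2)/f^2\ge a(1-\cos r)/r^2\ge$ const $\approx 1/4\cdot$, which must still be compared with $1/\pi$. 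This tangential estimate in the cap is the main obstacle. I would first try to sharpen it using $1-f'^2=(1-f')(1+f')$ with $1+f'\ge1+a\cos r$ together with $f\le r$, yielding about $a(1-\cos r)(1+a\cos r)/r^2\approx \tfrac12\cdot\tfrac12\cdot\tfrac32=3/8>1/\pi$ for small $r$, with the estimate checked uniformly on $r\le\eps<1/4$.

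\emph{Diameter and volume.} The meridian from pole to pole has length $\pi$, and the two poles are at distance exactly $\pi$, since any curve between them must cross every level $r$; this gives $\diam\ge\pi$. For the reverse inequality, every point lies within $\pi/2$ of one of the poles, but that alone yields only $\diam\le\pi$ via the poles. I would instead project a path between two points onto the meridians: go radially to the equator region if helpful, noting that the equatorial circle has length $\le 2\pi f(\pi/2)\cdot\tfrac12<\pi$. Alternatively I would compare with the spindle, whose diameter is $\pi$ with equality between the poles. For the volume, $f_\eps\ge f_0=\tfrac12\sin r$ pointwise, since the added integral term is nonnegative. Hence $\Vol\ge\int_0^\pi(\tfrac12\sin r)^{n-1}dr\,\Vol(S^{n-1})=2^{-(n-1)}\Vol(S^n,g_{\mathrm{round}})$.
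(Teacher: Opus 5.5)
\textbf{Gap: the diameter upper bound.} You do not actually prove $\diam\le\pi$. You wrote down the right conclusion (``$\diam\le\pi$ via the poles'') and then set it aside as insufficient. The reason you gave for it, that every point lies within $\pi/2$ of some pole, does not give $d(x,y)\le\pi$ when $x$ and $y$ are near different poles. The replacements you propose also fail.
\begin{itemize}
\item The equatorial route is never turned into a bound for arbitrary pairs. Also, the full equator has length $2\pi f_\eps(\pi/2)=\pi+2\pi c_\eps>\pi$; what you wrote is half of that.
\item Since $f_\eps\ge f_0$, we have $\widehat g_\eps\ge g_0$ on the punctured sphere. So comparison with the spindle only reproduces the lower bound.
\end{itemize}
The missing step is one line, and it is the paper's argument. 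For $x=(r_x,u_x)$ and $y=(r_y,u_y)$, the broken meridian through the north pole has length $r_x+r_y$, and the one through the south pole has length $2\pi-r_x-r_y$. These sum to $2\pi$, so one of them is at most $\pi$. Combined with your lower bound (the $1$-Lipschitz function $r$ separates the poles by $\pi$), this gives $\diam=\pi$.

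\textbf{The rest.} Your smoothness, antipodal-invariance and volume arguments are sound. Your curvature argument is correct but takes a genuinely different, region-by-region route. The cap tangential estimate you left open does close. Your sharpened bound reads $(1-(f_\eps')^2)/f_\eps^2\ge(1-\cos r)(2+\cos r)/(4r^2)$. This is a product of positive decreasing factors, so on $(0,1/4]$ it is at least its value at $r=1/4$, about $0.369>1/\pi$.

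\textbf{Comparison with the paper.} The paper avoids both the case split and the numerics.
\begin{itemize}
\item From $\chi'\le0$ and $f_\eps\le r$ alone, it gets $f_\eps''+\pi^{-1}f_\eps\le-\frac12\sin r+r/\pi\le0$ on all of $[0,\pi/2]$. This is $K_{\rm rad}\ge1/\pi$ everywhere, so your separate middle-region estimate via $c_\eps\le\eps/2$ is unnecessary.
\item Since $f_\eps'\ge0$, the same inequality makes $(f_\eps')^2+\pi^{-1}f_\eps^2$ nonincreasing from its value $1$ at the pole. That is exactly $K_{\rm tan}\ge1/\pi$.
\end{itemize}
This Sturm-type argument is uniform in $r$ and needs no smallness of $\eps$. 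Yours gives sharper constants away from the caps (e.g.\ $K_{\rm tan}\ge3/4$ there), but relies on explicit checks valid only for small $\eps$. Your remark that mixed planes are controlled because the curvature operator is diagonal in the adapted frame is a point the paper leaves implicit.
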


\begin{proof}
Near \(r=0\), where \(\chi=1\),
\[
 f_\eps(r)=\frac12(\sin r+r)
 =r-\frac{r^3}{12}+O(r^5).
\]
Thus \(f_\eps\) has a smooth odd extension with
\(f_\eps(0)=0\) and \(f_\eps'(0)=1\). Equivalently,
\((f_\eps(r)/r)^2\) is smooth and even, so the standard Cartesian
formula proves smooth extension at the pole. Reflection handles the
southern pole. Near the equator, \(f_\eps=\frac12\sin r+c_\eps\), which
is even about \(\pi/2\), and hence the reflected profile is smooth.

On the northern hemisphere,
\[
 f_\eps'=\frac12\cos r+\frac12\chi(r/\eps),
 \qquad
 f_\eps''=-\frac12\sin r+\frac1{2\eps}\chi'(r/\eps).
\]
In particular, \(0\le f_\eps'\le1\), \(f_\eps\le r\), and
\[
 f_\eps''+\frac1\pi f_\eps
 \le-\frac12\sin r+\frac r\pi\le0
 \qquad(0\le r\le\pi/2).
\]
The warped-product curvature formulas are
\[
 K_{\rm rad}=-\frac{f_\eps''}{f_\eps},
 \qquad
 K_{\rm tan}=\frac{1-(f_\eps')^2}{f_\eps^2}.
\]
Thus \(K_{\rm rad}\ge1/\pi\). Moreover,
\[
 \frac d{dr}\left((f_\eps')^2+\pi^{-1}f_\eps^2\right)
 =2f_\eps'(f_\eps''+\pi^{-1}f_\eps)\le0.
\]
Since the expression equals \(1\) at \(r=0\),
\[
 (f_\eps')^2+\pi^{-1}f_\eps^2\le1,
\]
and therefore \(K_{\rm tan}\ge1/\pi\). In dimension two, only the
radial sectional curvature occurs. The inequalities extend across the smooth
poles by continuity.

The radial coordinate is $1$-Lipschitz and a meridian from the north pole to the south pole
has length \(\pi\), so the pole distance is \(\pi\). Any two points can
be joined through one of the poles by a path of length
at most \(\pi\); hence the
diameter is exactly \(\pi\). Finally,
\(f_\eps\ge\frac12\sin r\), and integration of the volume form gives
\[
 \Vol(S^n,\widehat g_\eps)
 \ge2^{-(n-1)}\Vol(S^n,g_{\rm round}).
\]
The map
\[
 A(r,u)=(\pi-r,-u)
\]
is an isometry. It is the ordinary antipodal map and has degree
\((-1)^{n+1}\).
\end{proof}

Notice that the curvature is not uniformly bounded from above. At any transition point
\(t\eps\) with \(\chi'(t)<0\),
\[
 K_{\rm rad}(t\eps)=c(t)\eps^{-2}+O(1),
 \qquad c(t)>0.
\]
This curvature concentration is the mechanism that produces order-one
holonomy around loops of length \(O(\eps)\).

\section{The frame bundle metric and short-loop holonomy}

On the punctured sphere, identify
\(T((0,\pi)\times S^{n-1})\) with
\(\R\oplus TS^{n-1}\). Define a global orthonormal trivialization
\begin{equation}\label{eq:trivialization}
 \Phi_f(r,u)v
 =\langle v,u\rangle\partial_r
 +f(r)^{-1}\bigl(v-\langle v,u\rangle u\bigr),
 \qquad v\in\R^n.
\end{equation}
It trivializes \(\R\oplus TS^{n-1}\) and does not require
\(TS^{n-1}\) itself to be trivial. We choose the orientation of \(S^n\)
for which \(\Phi_f\) is a positive frame on the punctured sphere.
In this trivialization,
the connection matrix
is
\begin{equation}\label{eq:connection-form}
 A_f=(f'-1)(du\,u^T-u\,du^T).
\end{equation}
The matrix \(du\,u^T-u\,du^T\) is skew-symmetric. Writing a positively
oriented frame as \(\Phi_fQ\), \(Q\in\SO(n)\), gives
\begin{equation}\label{eq:frame-metric}
 \begin{aligned}
 \omega_f&=Q^{-1}dQ+Q^{-1}A_fQ,\\
 h_f&=dr^2+f^2g_{S^{n-1}}
 +b\bigl(Q^{-1}dQ+Q^{-1}A_fQ,
         Q^{-1}dQ+Q^{-1}A_fQ\bigr).
 \end{aligned}
\end{equation}
By \eqref{eq:middle-profile}, the connection forms for \(f_\eps\) and
\(f_0\) agree exactly on the middle annulus. Only the horizontal
angular coefficient changes there.

\begin{lemma}[Short-loop holonomy]\label{lem:cap}
Every element of \(\SO(n)\) is
realized as the holonomy of a loop based at either pole and having length at most
\[
 K_n\eps,
 \qquad K_n=(2+2\pi)\lfloor n/2\rfloor.
\]
Consequently,
\[
 \diam_{P_\eps}\pi_\eps^{-1}(N)\le K_n\eps
\]
and, for every \(\delta\ge\eps\),
\[
 \diam_{P_\eps}\pi_\eps^{-1}\{r\le\delta\}
 \le2\delta+K_n\eps.
\]
The analogous estimate holds at the southern pole.
\end{lemma}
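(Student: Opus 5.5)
Our plan is to compute holonomy explicitly on the cap \(\{r\le\eps\}\) with the trivialization \eqref{eq:trivialization} and the connection matrix \eqref{eq:connection-form}, using loops of a simple "sector" shape. Fix a great circle arc in \(S^{n-1}\): for orthonormal \(e_1,e_2\in\R^n\), let \(u(\theta)=\cos\theta\,e_1+\sin\theta\,e_2\). Consider the piecewise smooth loop based at \(N\) which goes out radially along \(u(0)\) to radius \(\rho\le\eps\), follows the latitude arc \(r=\rho\), \(u=u(\theta)\), \(0\le\theta\le\alpha\), and returns radially along \(u(\alpha)\). Along the radial segments \(du=0\), so \(A_f\) vanishes there and parallel transport is the identity in the \(\Phi_f\) trivialization. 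The one subtlety is that \(\Phi_f\) is singular at the pole: the frame \(\Phi_f(r,u)\) tends to a frame at \(N\) depending on \(u\). In Euclidean normal coordinates, \(\Phi_f(0,u)\) is just the identity matrix, since \(\langle v,u\rangle u+(v-\langle v,u\rangle u)=v\) when \(f\sim r\) and \(\partial_r=u\). Hence the trivialization extends continuously to the pole as the identity, and we need no correction term.

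On the arc we have \(du\,u^T-u\,du^T=(e_2e_1^T-e_1e_2^T)\,d\theta=:J\,d\theta\), a constant matrix generating rotation in the \(e_1e_2\)-plane. The transport ODE \(\dot Q=-A_fQ\) therefore integrates to \(\exp\bigl(-(f'(\rho)-1)\alpha J\bigr)\), a rotation in the \(e_1e_2\)-plane by angle \((1-f_\eps'(\rho))\alpha\), up to sign conventions. Now choose \(\rho\in[\eps/2,\eps]\) with \(\chi(\rho/\eps)=0\). This happens at \(\rho=\eps\), where \(f_\eps'(\eps)=\tfrac12\cos\eps\), so \(1-f'_\eps\ge1/2\). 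Taking \(\alpha\in[0,2\pi]\) then yields rotation angles filling \([0,\pi]\) and, using both orientations of traversal, all of \(\SO(2)\) in the plane. The loop length is \(2\rho+\rho'\alpha\) with \(\rho'=f_\eps(\rho)\le\rho\le\eps\), so it is at most \((2+2\pi)\eps\). Alternatively, one could go once around the full circle, giving angle \(2\pi(1-f')\); since \(f'\) ranges continuously over \([\tfrac12\cos\eps,1]\) on \([0,\eps]\), this sweeps an interval, but the sector loop is cleaner.

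Every \(Q\in\SO(n)\) is conjugate to a block-diagonal rotation, so it is a product of at most \(\lfloor n/2\rfloor\) rotations in mutually orthogonal planes. We concatenate the corresponding sector loops, all based at \(N\). Holonomy of a concatenation is the product, and since the trivialization is the identity at \(N\) there is no conjugation issue. The total length is at most \(K_n\eps\). The only point needing care is verifying the pole extension of the trivialization and the sign of \(A_f\); the rest is bookkeeping.

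The fiber diameter bound then follows because the horizontal lift of a loop with holonomy \(Q\) joins \(p\) to \(pQ\), or \(pQ^{-1}\), with the same length, and \(\SO(n)\) acts transitively on the fiber. For the bound on \(\pi_\eps^{-1}\{r\le\delta\}\), given two frames over points at distance at most \(\delta\) from \(N\), we lift radial geodesics horizontally to the fiber over \(N\), each of length at most \(\delta\), and join the endpoints inside \(\pi_\eps^{-1}(N)\) by a path of length at most \(K_n\eps\). This gives \(2\delta+K_n\eps\). The south pole case follows by the reflection symmetry \(r\mapsto\pi-r\).
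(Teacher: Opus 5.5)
Your proof is correct. Its skeleton is the paper's: short loops in the cap produce planar rotations, at most $\lfloor n/2\rfloor$ of them are concatenated, and horizontal lifts of radial segments give the cap estimate. The difference lies in how you produce the planar rotations.

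The paper's loops go out and back along the \emph{same} ray and make a full latitude circle at a variable radius $s\in[0,\eps]$. The paper identifies the action on $E$ and on $E^\perp$ invariantly, using the totally geodesic rotational $2$-sphere fixed by the isometry that is $+I$ on $E$ and $-I$ on $E^\perp$. It then gets every angle in $[0,\pi]$ from the intermediate value theorem applied to $2\pi(1-f_\eps'(s))$. Because the loop retraces its ray, the pole holonomy is a conjugate of the latitude holonomy, so nothing about the trivialization at the pole is needed.

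Your sector loops at the fixed radius $\rho=\eps$ leave and return along different rays. So you genuinely need the fact you checked: in normal coordinates $\Phi_f(r,u)=uu^T+(r/f(r))(I-uu^T)\to I$ as $r\to0$, because $f_\eps'(0)=1$. In exchange, your holonomy is computed exactly as $\exp\bigl((1-f_\eps'(\eps))\alpha J\bigr)$. The angle is linear in $\alpha$, so no continuity argument is needed. The action on $E^\perp$ is trivial automatically, because $J$ is supported on $\operatorname{span}(e_1,e_2)$.

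Both routes give the same length bound $(2+2\pi)\eps$ per planar rotation.
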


\begin{proof}
Fix a two-dimensional subspace \(E\subset T_NS^n\).
The rotational $2$-sphere
corresponding to \(E\) is totally geodesic: it is the relevant fixed
component of the fixed-point set of the isometry acting as \(+I\) on \(E\) and \(-I\) on
\(E^\perp\). Normal vectors in \(E^\perp\) extend as parallel fields,
so the holonomy of every loop in this \(2\)-sphere
acts trivially on \(E^\perp\).

Consider the loop that runs radially from the north pole to \(r=s\),
once around that latitude, and radially back. Define the real-valued
angle
\[
 \widetilde\alpha_\eps(s)=2\pi(1-f_\eps'(s))\in\R,
 \qquad
 \alpha_\eps(s)=\widetilde\alpha_\eps(s)\pmod{2\pi}.
\]
Parallel transport acts on \(E\) by rotation through
\(\alpha_\eps(s)\). Since
\[
 \widetilde\alpha_\eps(0)=0,
 \qquad
 \widetilde\alpha_\eps(\eps)
 =2\pi\left(1-\frac12\cos\eps\right)\ge\pi,
\]
continuity shows that every angle in \([0,\pi]\) is realized,
and reversing the loop realizes every angle in \([-\pi,0]\). The loop length is
\[
 2s+2\pi f_\eps(s)\le(2+2\pi)\eps.
\]
Every element of \(\SO(n)\) is a product of at most
\(\lfloor n/2\rfloor\) planar rotations. Concatenating the
corresponding loops proves the first claim.

Given two frames over \(\{r\le\delta\}\), lift radial curves from their
base points to the pole, use the horizontal lift of a short base loop there to
match the resulting frames, and
traverse the other radial lift in reverse. The
length is at most \(2\delta+K_n\eps\). Reflection yields the
corresponding statement for the southern part.
\end{proof}

\section{Metric contraction and the base limit}

We use the following elementary quotient estimate.

\begin{lemma}\label{lem:contract}
Let \(Z\) be a compact length space and let
\(A_1,\dots,A_k\subset Z\) be pairwise disjoint compact sets. Collapse
each \(A_j\) to a point and equip the quotient \(\bar Z\) with the
induced chain metric. Then \(\bar Z\) is a compact length space, its
metric topology is the ordinary quotient topology, and
\[
 0\le d_Z(x,y)-d_{\bar Z}(\bar x,\bar y)
 \le\sum_{j=1}^k\diam_ZA_j.
\]
In particular,
\[
 d_{\GH}(Z,\bar Z)
 \le\frac12\sum_{j=1}^k\diam_ZA_j.
\]
\end{lemma}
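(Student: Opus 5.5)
We define the quotient distance explicitly and verify each assertion in turn. For \(\bar x,\bar y\in\bar Z\), set
\[
 d_{\bar Z}(\bar x,\bar y)=\inf\sum_{i=0}^{N} d_Z(p_i,q_i),
\]
where the infimum runs over chains with \(p_0\in\bar x\), \(q_N\in\bar y\), and \(q_i\sim p_{i+1}\), meaning that \(q_i\) and \(p_{i+1}\) are equal or lie in a common \(A_j\). Symmetry and the triangle inequality are immediate. The inequality \(d_{\bar Z}\le d_Z\) follows from the one-step chain.

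\textbf{Reducing chains.} Because the \(A_j\) are disjoint, we may assume that each \(A_j\) is entered at most once in a chain. If \(A_j\) were entered twice, the portion of the chain between the two visits could be deleted: the exit point of the first visit and the entry point of the second are equivalent, so deleting only lowers the sum. A reduced chain therefore has at most \(k\) jumps, and each jump is across a distinct \(A_j\).

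\textbf{The upper bound.} Take a reduced chain. Replace each jump inside \(A_j\) by an actual segment in \(Z\) of length at most \(\diam_Z A_j\). This produces a path from \(x\) to \(y\) of length at most
\[
 \sum_i d_Z(p_i,q_i)+\sum_j \diam_Z A_j .
\]
Taking the infimum over chains gives \(d_Z(x,y)\le d_{\bar Z}(\bar x,\bar y)+\sum_j\diam_Z A_j\), for any representatives \(x\in\bar x\), \(y\in\bar y\). This is the stated estimate.

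\textbf{Positivity and compatibility of topologies.} We show that \(d_{\bar Z}\) separates distinct classes; this is the main point. Let \(\bar x\ne\bar y\). Set \(\eta\) to be the minimum of the following positive quantities: \(\min_{i\ne j} d_Z(A_i,A_j)\), and the distances \(d_Z(x,A_j)\), \(d_Z(y,A_j)\) and \(d_Z(x,y)\) for those sets and points not identified with one another. These are positive by compactness and disjointness. Any reduced chain from \(\bar x\) to \(\bar y\) contains at least one genuine step between points in different classes. That step either leaves a set \(A_j\) towards another set or towards a point, or joins \(x\) and \(y\) directly, so its length is at least \(\eta\). Hence \(d_{\bar Z}(\bar x,\bar y)\ge\eta>0\).

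The projection \(Z\to\bar Z\) is \(1\)-Lipschitz, hence continuous, so \(\bar Z\) is compact. The identity from the quotient topology to the metric topology is then a continuous bijection from a compact space. The target is a metric space, hence Hausdorff. A continuous bijection from a compact space onto a Hausdorff space is a homeomorphism.

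\textbf{Length structure.} To see that \(\bar Z\) is a length space, concatenate the images of near-optimal segments of a reduced chain. The jumps cost zero in \(\bar Z\), and the \(1\)-Lipschitz projection does not increase length. So \(\bar Z\) is a length space, and by compactness it is geodesic.

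\textbf{Gromov--Hausdorff bound.} The projection is a surjective map with distortion at most \(D=\sum_j\diam_Z A_j\). Its graph is therefore a correspondence of distortion at most \(D\), which gives \(d_{\GH}(Z,\bar Z)\le D/2\).
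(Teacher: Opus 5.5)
Your proof is correct. Apart from the separation step, it follows the paper's argument: the same reduction to chains that visit each $A_j$ at most once, the same replacement of jumps by paths of length at most $\diam_Z A_j$, the same compact-to-Hausdorff bijection for the topology, the same almost-minimizing chains for the length structure, and the graph of the projection as the correspondence.

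The genuine difference is how you prove that $d_{\bar Z}$ separates points.
\begin{itemize}
\item The paper fixes a combinatorial pattern: which contracted sets are visited, and in which order. The cost of chains with that pattern is a continuous function on a finite product of compact sets, so it attains its minimum. A zero minimum would force consecutive classes to intersect, hence $\bar x=\bar y$. Finiteness of the set of patterns then gives positivity.
\item You instead bound every step of a reduced chain from below by one explicit constant $\eta$. This constant is built from the mutual distances between the distinct classes among $\{x\}$, $\{y\}$, $A_1,\dots,A_k$.
\end{itemize}

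What each route buys:
\begin{itemize}
\item Your route is more elementary and quantitative: it gives $d_{\bar Z}(\bar x,\bar y)\ge\eta$ directly.
\item For positivity, your route does not actually need the ``each $A_j$ at most once'' reduction. It only needs that every jump lies inside some $A_j$, together with the fact that some step must change class because $\bar x\ne\bar y$. The paper needs the reduction to make the set of patterns finite.
\item The paper's route shows, as a by-product, that the infimum defining $d_{\bar Z}$ is attained by an optimal chain.
\end{itemize}

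One point you should make explicit. Your claim that every step of a reduced chain joins two of the special classes requires that trivial jumps $q_i=p_{i+1}\notin\bigcup_j A_j$ have first been merged using the triangle inequality. Without this, intermediate chain points can be arbitrary points of $Z$, and an individual step need not have length at least $\eta$. Your phrase ``each jump is across a distinct $A_j$'' assumes this merging but does not say it.
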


\begin{proof}
An admissible chain alternates ordinary paths in \(Z\) with zero-cost
jumps whose endpoints lie in the same \(A_j\); its cost is the sum of
the ordinary path lengths. Fix \(\eta>0\) and choose a chain from
\(x\) to \(y\) of cost at most
\(d_{\bar Z}(\bar x,\bar y)+\eta\), having the fewest possible
zero-cost jumps among all such chains. If the chain uses two zero-cost jumps in the same \(A_j\), replace the
entire subchain from the start of the first such jump to the end of the
last such jump by one
zero-cost jump in \(A_j\). This does not increase the cost and decreases
the number of jumps, a contradiction. Hence every \(A_j\) is used at
most once. Replacing its jump by a path in \(Z\) of length at most
\(\diam A_j+\eta/k\), and then letting \(\eta\downarrow0\), proves the
distance estimate. The reverse inequality follows because the
quotient map is 1-Lipschitz.

To prove separation, fix distinct quotient classes \(\bar x,\bar y\).
For each choice of a subset of \(\{A_1,\dots,A_k\}\) and each ordering
of that subset, consider chains that visit exactly those contracted
sets in the prescribed order. Their cost is a continuous function on
a finite product of compact sets, and hence attains its minimum. If
this minimum were zero, every ordinary-distance term would vanish.
Consecutive quotient classes would then intersect, forcing all classes
in the chain, including \(\bar x\) and \(\bar y\), to coincide. Since
there are finitely many patterns, the minimum over them is positive.
Thus the chain pseudometric is a genuine metric.

Let \(Z/{\sim}\) be the ordinary topological quotient. The 1-Lipschitz
quotient map induces a continuous bijection
\[
 Z/{\sim}\longrightarrow(\bar Z,d_{\bar Z}).
\]
The source is compact and the target is Hausdorff, so it is a
homeomorphism. Compactness follows, and almost-minimizing chains show
that the quotient is a length space. Finally, the graph of the
quotient map is a correspondence whose distortion is bounded by the
sum of the diameters, proving the Gromov--Hausdorff estimate.
\end{proof}

Let \(X_n\) be the metric completion of
\[
 (0,\pi)\times S^{n-1},
 \qquad
 g_0=dr^2+\frac14\sin^2r\,g_{S^{n-1}}.
\]
It adds one point at each end and is topologically
\(\Sigma S^{n-1}\cong S^n\).

\begin{proposition}\label{prop:base-limit}
There is a universal constant \(C\) such that
\[
 d_{\GH}\bigl((S^n,\widehat g_\eps),X_n\bigr)
 \le C\sqrt\eps.
\]
The convergence is antipodally equivariant.
\end{proposition}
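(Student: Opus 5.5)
Since \(f_\eps\ge f_0\), I would use the identity map on the common coordinates \((r,u)\in(0,\pi)\times S^{n-1}\) and send the poles of \(S^n\) to the two completion points of \(X_n\). This map is antipodally equivariant, because \(A(r,u)=(\pi-r,-u)\) acts the same way on both sides and \(f_\eps\), \(f_0\) are both reflection symmetric. The task is to bound its distortion by \(C\sqrt\eps\).

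\textbf{Lower bound on distances.} On the punctured sphere \(\widehat g_\eps\ge g_0\) pointwise, because \(f_\eps\ge f_0\). Every curve in \(S^n\) can be perturbed to avoid the poles at arbitrarily small cost (for \(n\ge2\)), or one can pass through the poles, which correspond exactly to the completion points. Hence \(d_{X_n}\le d_{\widehat g_\eps}\).

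\textbf{Upper bound on distances.} I would take a \(g_0\)-almost minimizing curve \(\gamma\) in \(X_n\) and compare its \(\widehat g_\eps\)-length with its \(g_0\)-length. Split \(\gamma\) into the part inside \(\{\delta\le r\le\pi-\delta\}\) and the rest. On the middle region \(f_\eps=f_0+c_\eps\) with \(c_\eps\le\eps/2\), and \(f_0\ge\frac12\sin\delta\gtrsim\delta\) for \(\delta\ge\eps\). Therefore
\[
 \frac{f_\eps}{f_0}\le1+\frac{C\eps}{\delta},
\]
and the length there grows by at most a factor \(1+C\eps/\delta\), i.e. additively by at most \(C\pi\eps/\delta\), since lengths are at most about \(\pi\).

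Near a pole, the polar caps have small diameter in both metrics: \(\{r\le\delta\}\) has diameter at most \(2\delta\) in either metric, by going radially to the pole. So I would modify \(\gamma\). Whenever it enters \(\{r<\delta\}\), replace the portion between its first entry and last exit (at each end) by radial segments through the pole. This costs at most \(4\delta\) per cap, and the comparison is only needed for points with \(r\ge\delta\), since points in caps are within \(2\delta\) of the pole. Points inside caps are handled by the triangle inequality with error \(O(\delta)\). A minimizing curve enters each cap region at most once in this sense after the modification.

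Combining, the distortion is at most
\[
 C\Bigl(\delta+\frac{\eps}{\delta}\Bigr),
\]
and choosing \(\delta=\sqrt\eps\) gives \(d_{\GH}\le C\sqrt\eps\). The constant is independent of \(n\), since only the profile functions enter.

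\textbf{Main obstacle.} The delicate step is the upper bound near the caps: curves of \(X_n\) that dip close to a cone point without passing through it. The multiplicative comparison degenerates there, because \(f_\eps/f_0\) is unbounded as \(r\to0\). This is exactly why the cutoff \(\delta\) and the rerouting through the pole are needed, and why the rate is \(\sqrt\eps\) rather than \(\eps\). An alternative I would keep in reserve is to invoke Lemma~\ref{lem:contract}: collapse the two caps \(\{r\le\delta\}\) in each space and compare the two quotients, whose metrics are bi-Lipschitz with constant \(1+C\eps/\delta\). The cost of the collapse is \(O(\delta)\) by the diameter bound above.
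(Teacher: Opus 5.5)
Your proof is correct. It rests on the same two estimates as the paper: the polar $\delta$-caps have diameter at most $2\delta$, and on the annulus $\{\delta\le r\le\pi-\delta\}$ one has $1\le f_\eps/f_0\le 1+C\eps/\delta$, with $\delta=\sqrt\eps$. Your main route organizes these differently from the paper, however.

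The paper never estimates the identity map directly. It collapses the closed $\delta$-caps in both $(S^n,\widehat g_\eps)$ and $X_n$, and pays $2\delta$ on each side via Lemma~\ref{lem:contract}. It then compares the two collapsed annuli, whose quotient length metrics satisfy $d_0\le d_\eps\le(1+C\eps/\delta)d_0$. In that setting cap excursions become zero-cost jumps rather than radial detours through the poles.

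You instead bound the distortion of the identity bijection itself. The global inequality $f_\eps\ge f_0$ gives $d_{X_n}\le d_{\widehat g_\eps}$ exactly, with no cutoff. Rerouting almost-minimizers through the poles, together with the fact that distances to the poles equal $r$ in both metrics, gives the reverse bound with error $C(\delta+\eps/\delta)$.

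What your version buys is a more elementary argument. It produces an explicit bijective, antipodally equivariant approximation and avoids the chain-metric quotient with its separation and topology checks. What the paper's version buys is modularity. The same contraction lemma is reused verbatim in Theorem~\ref{thm:frame-convergence}. There the fibers over the poles are copies of $\SO(n)$ with small ambient but positive intrinsic diameter, so there is no natural bijection with $Y_n$ and collapsing the polar caps is genuinely needed.

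A minor point: each rerouting costs at most $2\delta$ per cap, not $4\delta$, though this does not affect the rate.
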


\begin{proof}
Fix \(\eps\le\delta\le1/2\) and contract the closed northern and
southern \(\delta\)-caps. Their diameters are at most \(2\delta\). On
the remaining annulus, \eqref{eq:middle-profile} gives
\[
 1\le\frac{f_\eps}{f_0}
 \le1+C\frac\eps\delta,
\]
because \(\sin r\ge c\delta\) for
\(r\in[\delta,\pi-\delta]\), with a universal constant \(c>0\).
Thus the annular metrics are bi-Lipschitz with Lipschitz constant
\(1+C\eps/\delta\). After contraction, any excursion into a polar cap
can be replaced by a zero-cost jump. The quotient metric is therefore
the annular length metric with each boundary component collapsed to a
point. On the common quotient, the two distances satisfy
\[
 d_0\le d_\eps\le(1+C\eps/\delta)d_0.
\]
The two radial shortcuts give diameter at most \(\pi-2\delta\) for
the limiting quotient, so the identity correspondence has distortion
at most \(C(\eps/\delta)(\pi-2\delta)\).
Applying Lemma~\ref{lem:contract} on both sides yields
\[
 d_{\GH}\bigl((S^n,\widehat g_\eps),X_n\bigr)
 \le4\delta+C\frac\eps\delta.
\]
Setting \(\delta=\sqrt\eps\) gives the result. All contractions and annular identifications commute with the antipodal map.
\end{proof}

\section{The frame bundle limit}

Using the trivialization \eqref{eq:trivialization}, identify the regular frame space with \(P^\circ=(0,\pi)\times L_n\).
Let \(h_0\) be
the metric \eqref{eq:frame-metric} associated to \(f_0\). Denote its
intrinsic distance by \(d^\circ\). Add two points \(v_N,v_S\), and for
\(x,y\in P^\circ\) set
\begin{equation}\label{eq:limit-distance}
 d_Y(x,y)=\min\left\{
 d^\circ(x,y),\ r_x+r_y,\ 2\pi-r_x-r_y
 \right\}.
\end{equation}
Also set
\[
 d_Y(v_N,x)=r_x,
 \qquad d_Y(v_S,x)=\pi-r_x,
 \qquad d_Y(v_N,v_S)=\pi.
\]

\begin{proposition}\label{prop:limit-topology}
These formulas define on \(Y_n=P^\circ\cup\{v_N,v_S\}\) a metric
that turns $Y_n$ into a compact length metric space.
Its topology is given by
\[
 Y_n\cong\Sigma L_n,
\]
and on the regular part the metric is locally the Riemannian metric \(h_0\).
\end{proposition}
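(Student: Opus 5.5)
The plan is to reduce everything to one comparison: the radial coordinate \(r\) is \(1\)-Lipschitz for \(h_0\) on \(P^\circ\). This holds because \(h_0\ge dr^2\) by \eqref{eq:frame-metric}. It gives
\[
 d^\circ(x,y)\ge|r_x-r_y|,
\]
and, for any path in \(P^\circ\) joining \(x\) to \(y\) that comes within distance \(s\) of the end \(r=0\), a length of at least \(r_x+r_y-2s\). The same holds at \(r=\pi\).

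\textbf{Metric axioms.} Symmetry is clear from \eqref{eq:limit-distance}. Positivity on \(P^\circ\) holds because \(r_x+r_y>0\) and \(2\pi-r_x-r_y>0\) there, while \(d^\circ\) is a genuine Riemannian distance. For the triangle inequality I will read \(d_Y\) as a minimum over three families of ``routes'': directly through \(P^\circ\), through \(v_N\) at cost \(r_x+r_y\), or through \(v_S\). Given \(x,y,z\), I concatenate optimal routes for \((x,z)\) and \((z,y)\). If both legs are direct, \(d^\circ\) is already a metric. If one leg passes through \(v_N\), say the leg from \(x\) to \(z\), then I bound \(d_Y(x,y)\le r_x+r_y\) and use \(r_y\le r_z+d_Y(z,y)\). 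This last inequality follows from the Lipschitz bound, since every term in the minimum \eqref{eq:limit-distance} is at least \(|r_z-r_y|\). The remaining combinations, including those with a vertex as one of the three points, are handled by the same observation.

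\textbf{Length structure and compactness.} The length property follows because every term in the minimum is realized up to \(\eta\) by curves. For the \(d^\circ\) term these are \(h_0\)-curves, and such a curve is still admissible for \(d_Y\) because \(d_Y\le d^\circ\) locally. For the other two terms I use radial curves \(t\mapsto(t,u,Q)\), which have unit speed since \(dr\) is the only nonzero component along them. They reach \(v_N\) or \(v_S\) as \(t\to0\) or \(t\to\pi\), with \(d_Y(v_N,(t,u,Q))=t\to0\). For compactness I take a sequence in \(Y_n\). If infinitely many terms have \(r\) in a fixed \([\delta,\pi-\delta]\), a subsequence converges there, because on compact parts of \(P^\circ\) the topology of \(d_Y\) agrees with that of \(h_0\) (see below). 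Otherwise \(r\to0\) or \(r\to\pi\) along a subsequence, which then converges to \(v_N\) or \(v_S\).

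\textbf{Local Riemannian structure.} Fix \(x\) with \(r_x\in(\delta,\pi-\delta)\). For \(y\) with \(d^\circ(x,y)<\delta\), the alternatives \(r_x+r_y\) and \(2\pi-r_x-r_y\) exceed \(\delta\), so \(d_Y=d^\circ\) on a small ball around \(x\). Hence \(d_Y\) is locally the Riemannian distance of \(h_0\), and the metric topology on \(P^\circ\) is the manifold topology.

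\textbf{Homeomorphism with \(\Sigma L_n\).} I define the map \(\Sigma L_n\to Y_n\) by \([t,\ell]\mapsto(\pi t,\ell)\), with the cone points sent to \(v_N\) and \(v_S\). It is a bijection, and it is continuous on the open part by the previous step. It is continuous at the cone points because \(d_Y(v_N,(r,\ell))=r\), uniformly in \(\ell\); this is exactly where the shortcut terms in \eqref{eq:limit-distance} are used. Since \(\Sigma L_n\) is compact (with \(L_n\) compact) and \(Y_n\) is Hausdorff, the map is a homeomorphism.

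\textbf{Main obstacle.} The step I expect to be the main obstacle is the triangle inequality in the mixed cases. Its key input is that every term of \eqref{eq:limit-distance} dominates \(|r_x-r_y|\), which handles the mixed cases uniformly. A secondary subtlety is that \(d^\circ\) alone does not make \(P^\circ\) precompact near the ends, since fibers keep their size. Compactness must therefore come from the vertex terms and not from \(h_0\).
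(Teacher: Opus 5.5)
Your proof is correct, but it organizes the argument differently from the paper.

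\textbf{The paper's route.} The paper reads \eqref{eq:limit-distance} as a path-chain metric: ordinary $h_0$-paths plus zero-cost jumps at the two added vertices, so that $Y_n$ is $[0,\pi]\times L_n$ with each end collapsed. It checks that chains visiting both vertices never improve the displayed minimum. It then takes the triangle inequality and the length property from the chain construction, compactness from that of $[0,\pi]\times L_n$, and the suspension topology from the explicit balls $B(v_N,\delta)=\{v_N\}\cup((0,\delta)\times L_n)$.

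\textbf{Your route.} You verify the axioms directly. The single inequality $d_Y\ge|r_x-r_y|$, which comes from $h_0\ge dr^2$, settles every mixed case of the triangle inequality. A continuous bijection from the compact space $\Sigma L_n$ onto the Hausdorff space $Y_n$ gives the topology; it also makes your separate sequential compactness argument redundant.

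\textbf{Comparison.} The underlying input is the same, since the paper's lower bounds for chains through a vertex also rest only on $r$ being $1$-Lipschitz. The paper's formulation presents $Y_n$ as exactly the kind of collapsed quotient to which Lemma~\ref{lem:contract} and the cap-collapse comparison in Theorem~\ref{thm:frame-convergence} apply. Yours is self-contained, needs no chain formalism, and makes the topological identification fully explicit.

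\textbf{Two small corrections.}
\begin{enumerate}
\item For the local Riemannian statement, shrink the ball, say to $d^\circ$-radius $\delta/4$. Then $d_Y=d^\circ$ holds for all pairs of points in it, not only for distances from the center.
\item Your closing remark is inaccurate. $(P^\circ,d^\circ)$ is in fact totally bounded, because every point with $r<\delta$ lies within $\delta$ of the compact level $\{\delta\}\times L_n$ along a radial segment. What fails without the vertices is completeness, not precompactness. Your compactness argument does not use that remark, so nothing in the proof breaks.
\end{enumerate}
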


\begin{proof}
The displayed formula is the path-chain metric obtained by allowing ordinary
\(h_0\)-paths and zero-cost changes in all angular and frame coordinates at
either added vertex. A chain that visits one vertex has a cost bounded
below by the corresponding radial shortcut. If it visits both, then
in the order north--south its cost is at least
\[
 r_x+\pi+(\pi-r_y)\ge r_x+r_y,
\]
and in the opposite order it is at least
\[
 (\pi-r_x)+\pi+r_y\ge2\pi-r_x-r_y.
\]
Thus no more complicated chain improves the displayed minimum. This
also proves the triangle inequality.

For \(0<\delta<\pi/2\),
\[
 B(v_N,\delta)=\{v_N\}\cup((0,\delta)\times L_n),
\]
and similarly at \(v_S\). Hence the topology is the suspension
topology. Compactness follows from compactness of \([0,\pi]\times L_n\)
and the quotient description. The chain construction provides $Y_n$ with the structure of a length
space, and on
every relatively compact normal neighborhood in the regular part
the radial shortcut costs are bounded away from zero, so the metric agrees
locally with the Riemannian metric \(h_0\).
\end{proof}

\begin{theorem}\label{thm:frame-convergence}
For \(0<\eps<1/4\),
\[
 d_{\GH}(P_\eps,Y_n)\le C_n\sqrt\eps.
\]
The Gromov--Hausdorff approximation maps from \(P_\eps\) to \(Y_n\)
may be chosen equivariant with respect to the right \(\SO(n)\) action.
For odd \(n\), they may also be chosen equivariant under the natural
lift of the antipodal involution.
\end{theorem}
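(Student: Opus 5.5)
The argument mirrors Proposition~\ref{prop:base-limit}, with Lemma~\ref{lem:cap} replacing the trivial bound on the diameter of polar caps. Fix \(\eps\le\delta\le1/2\) and let \(A_N^\eps=\pi_\eps^{-1}\{r\le\delta\}\) and \(A_S^\eps=\pi_\eps^{-1}\{r\ge\pi-\delta\}\). These are disjoint compact subsets of \(P_\eps\). By Lemma~\ref{lem:cap}, each has diameter at most \(2\delta+K_n\eps\). Lemma~\ref{lem:contract} then shows that the quotient \(\bar P_\eps\), obtained by collapsing \(A_N^\eps\) and \(A_S^\eps\) to points, satisfies
\[
 d_{\GH}(P_\eps,\bar P_\eps)\le 2\delta+K_n\eps .
\]
On the limit side, let \(\bar Y_\delta\) be \(Y_n\) with the closed sets \(\{v_N\}\cup\{r\le\delta\}\) and \(\{v_S\}\cup\{r\ge\pi-\delta\}\) collapsed. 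Their \(d_Y\)-diameters are at most \(2\delta\) by the radial shortcut in \eqref{eq:limit-distance}, so \(d_{\GH}(Y_n,\bar Y_\delta)\le2\delta\).

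Next I compare the two collapsed spaces through the identity on the annular region \([\delta,\pi-\delta]\times L_n\), which is common to both in the trivialization \eqref{eq:trivialization}. By \eqref{eq:middle-profile}, \(A_{f_\eps}=A_{f_0}\) on the annulus. In \eqref{eq:frame-metric}, the vertical term \(b(\omega,\omega)\) is therefore identical for \(h_\eps\) and \(h_0\), and only the horizontal coefficient \(f^2g_{S^{n-1}}\) changes. Since \(1\le f_\eps/f_0\le1+C\eps/\delta\) there, we get
\[
 h_0\le h_\eps\le(1+C\eps/\delta)^2h_0
\]
pointwise on the annulus. As in the base case, once the caps are collapsed, any excursion into a cap can be replaced by a zero-cost jump. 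Hence both quotient distances are chain metrics built from annular path lengths plus jumps at the two collapsed boundary sets. This yields
\[
 d_{\bar Y}\le d_{\bar P_\eps}\le(1+C\eps/\delta)\,d_{\bar Y}.
\]
For the \(Y\)-side I must check that the quotient of \(Y_n\) is indeed this annular chain metric. This follows from the chain description in the proof of Proposition~\ref{prop:limit-topology}: vertex visits become jumps inside the collapsed sets. The diameter of \(\bar Y_\delta\) is bounded by a constant \(D_n\), since \(d^\circ\) restricted to a fixed slice is at most \(\diam L_n\) plus radial travel. The distortion is therefore at most \(C D_n\eps/\delta\). Combining the three estimates gives
\[
 d_{\GH}(P_\eps,Y_n)\le 4\delta+K_n\eps+C_n\eps/\delta ,
\]
and the choice \(\delta=\sqrt\eps\) gives \(C_n\sqrt\eps\).

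For equivariance, the approximation \(P_\eps\to Y_n\) sends an annular point to the same triple \((r,u,Q)\), and sends \(A_N^\eps\) to \(v_N\) and \(A_S^\eps\) to \(v_S\). The right \(\SO(n)\) action is \(Q\mapsto Qg\) in the trivialization, it preserves the collapsed sets, and it fixes the vertices, so the map is equivariant. For odd \(n\), the antipodal map \(A\) preserves orientation by Proposition~\ref{prop:base-geometry}. Its differential in the trivialization is \(dA\,\Phi_{f}(r,u)=\Phi_f(\pi-r,-u)\,J\) for a fixed matrix \(J\), which I will compute: \(\partial_r\) maps to \(-\partial_r\) and tangential vectors map by \(-I\), and one checks \(J=-I\). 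Since \(n\) is odd, \(-I\notin\SO(n)\), so I will instead use \(dA\,\Phi_f(r,u)v=\Phi_f(\pi-r,-u)(-v)\) and verify directly that the lift \((r,u,Q)\mapsto(\pi-r,-u,-Q)\) preserves positive frames. The main obstacle is getting this sign bookkeeping right, together with checking that the lift is the same formula for \(h_\eps\) and \(h_0\) and swaps the collapsed sets. Once it is settled, the construction commutes with the lift.
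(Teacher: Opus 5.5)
Your Gromov--Hausdorff estimate and the right \(\SO(n)\)-equivariance follow the paper's argument step for step. You collapse the closed polar frame caps via Lemmas~\ref{lem:cap} and~\ref{lem:contract}, compare the annular chain metrics through the identity using \(h_0\le h_\eps\le(1+C\eps/\delta)^2h_0\), and then collapse the \(\delta\)-caps of \(Y_n\). Since \(\delta=\sqrt\eps<1/2\) is admissible for every \(\eps<1/4\), you do not need the paper's separate crude estimate for \(1/16\le\eps<1/4\).

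The genuine error is in the odd-\(n\) antipodal part: \(J=-I\) is wrong. You correctly note that \(dA(\partial_r)=-\partial_r\) and that \(dA\) acts as \(-\mathrm{id}\) on tangential vectors. However, the radial slot of the target frame is \(\langle w,-u\rangle\partial_r\), so the flip \(u\mapsto-u\) absorbs the sign of \(\partial_r\). For instance, \(\Phi_f(r,u)u=\partial_r\) and \(dA(\partial_r)=-\partial_r=\Phi_f(\pi-r,-u)u\), so \(J\) fixes \(u\). Solving in general, using \(f(\pi-r)=f(r)\), gives
\[
 dA\circ\Phi_f(r,u)=\Phi_f(\pi-r,-u)\circ J_u,\qquad J_u=2uu^T-I,
\]
which is \eqref{eq:antipodal-frame}. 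Your version is also internally inconsistent. For odd \(n\) we have \(\det(-I)=-1\), so \(J=-I\) would say that \(dA\) reverses the orientation in which \(\Phi_f\) is positive. That contradicts the degree \((-1)^{n+1}=1\) you invoke. Moreover, \((r,u,Q)\mapsto(\pi-r,-u,-Q)\) does not map \(\SO(n)\) into \(\SO(n)\), so the direct verification you plan would fail.

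With the correct matrix, \(\det J_u=(-1)^{n-1}=1\) for odd \(n\), and the lift is \((r,u,Q)\mapsto(\pi-r,-u,J_uQ)\). Since \(J_u\) does not involve \(f\), this one formula describes the lift for both \(h_\eps\) and \(h_0\) on the annulus. That is what ``commutes with the coordinate identification'' means in the paper.

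You should also say why this lift gives an involution of \(Y_n\):
\begin{enumerate}
\item On \(P^\circ\) it is the natural lift of the isometry \(A\) of \(g_0\), hence an isometry of \(h_0\) and of \(d^\circ\).
\item It sends \(r\) to \(\pi-r\), so it exchanges the two shortcut terms in \eqref{eq:limit-distance} and extends isometrically by swapping \(v_N\) and \(v_S\).
\item It swaps the closed caps, so your cap-collapse map intertwines the two involutions.
\end{enumerate}

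Finally, state explicitly that the distortion of your specific map, not just \(d_{\GH}\), is bounded by the three comparison errors. This follows from \(0\le d_Z-d_{\bar Z}\le\sum_j\diam_ZA_j\) in Lemma~\ref{lem:contract}. Also note that the image \(\{v_N,v_S\}\cup\bigl((\delta,\pi-\delta)\times L_n\bigr)\) is \(\delta\)-dense.
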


\begin{proof}
Assume first \(\eps<1/16\) and put \(\delta=\sqrt\eps\). Contract the
two closed polar frame caps in \(P_\eps\) and denote the quotient space
by $Q_{\eps,\delta}$. Lemmas~\ref{lem:cap} and
\ref{lem:contract} give
\[
 d_{\GH}(P_\eps,Q_{\eps,\delta})
 \le2\delta+K_n\eps.
\]
On the middle annulus the connection forms agree exactly. Only the
horizontal angular coefficient changes, and
\[
 h_0\le h_\eps
 \le\left(1+C\frac\eps\delta\right)^2h_0.
\]
Let \(Q_{0,\delta}\) be the corresponding annular quotient equipped
with the length metric induced by \(h_0\), with each boundary component
collapsed separately. As in the proof of Proposition~\ref{prop:base-limit},
cap excursions can be replaced by zero-cost jumps. Under the coordinate
identification, the quotient distances satisfy
\[
 d_{Q_{0,\delta}}\le d_{Q_{\eps,\delta}}
 \le(1+C\eps/\delta)d_{Q_{0,\delta}}.
\]
The radial shortcuts imply
\(\diam Q_{0,\delta}\le\pi-2\delta\). Consequently the identity
correspondence has distortion at most
\(C(\eps/\delta)\diam Q_{0,\delta}\), and hence
\[
 d_{\GH}(Q_{\eps,\delta},Q_{0,\delta})
 \le C_n\frac\eps\delta.
\]
The two \(\delta\)-caps in \(Y_n\) have diameter at most \(2\delta\),
so contracting them changes the Gromov--Hausdorff distance by at most \(2\delta\). Hence
\[
 d_{\GH}(P_\eps,Y_n)
 \le4\delta+K_n\eps+C_n\frac\eps\delta
 \le C_n\sqrt\eps.
\]

If \(1/16\le\eps<1/4\), the full product
\(P_\eps\times Y_n\) is a correspondence of distortion at most
\(\max\{\diam P_\eps,\diam Y_n\}\). Horizontally lifting a minimizing
base path and then moving in its terminal fiber gives
\[
 \diam P_\eps\le\pi+\diam(\SO(n),b),
 \qquad \diam Y_n=\pi.
\]
Since \(\sqrt\eps\ge1/4\), increasing \(C_n\) proves the estimate
on the remaining interval.

For explicit approximation maps, send the northern and southern closed
\(\delta\)-caps of \(P_\eps\) to \(v_N\) and \(v_S\), respectively,
and use the coordinate identification on the remaining open annulus.
For \(\eps<1/16\), take \(\delta=\sqrt\eps\). The contraction and
quotient comparisons above bound the distortion by \(C_n\sqrt\eps\),
and the image is \(\delta\)-dense. For the remaining values of \(\eps\),
take \(\delta=1/4\) and use the uniform diameter bounds.
These maps commute with the right \(\SO(n)\) action. For odd \(n\),
the antipodal map preserves orientation and its natural lift exchanges
the caps and commutes with the coordinate identification, so the maps
are also antipodally equivariant. The approximation maps are taken in
the direction \(P_\eps\to Y_n\): no exact \(\SO(n)\)-equivariant map
\(Y_n\to P_\eps\) can exist, since the vertices of \(Y_n\) are fixed
points and the action on \(P_\eps\) is free.
\end{proof}

\begin{proposition}\label{prop:spherical-quotient}
The coordinate projection induces an isometry
\[
 Y_n/\SO(n)\cong X_n.
\]
\end{proposition}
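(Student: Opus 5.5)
Since $\SO(n)$ acts by isometries on $(Y_n,d_Y)$, the metric quotient distance is $\bar d([x],[y])=\inf_{Q,Q'} d_Y(xQ,yQ')=\inf_{Q}d_Y(x,yQ)$. The plan is to compare this with $d_{X_n}$ via the projection $p(r,u,Q)=(r,u)$, $p(v_N)=N$, $p(v_S)=S$. The map is a bijection on orbits: regular orbits are exactly the fibers $\{(r,u)\}\times\SO(n)$, and the vertices are fixed points. So it suffices to prove $\bar d([x],[y])=d_{X_n}(p(x),p(y))$. Recall that $d_{X_n}$ is given by the formula analogous to \eqref{eq:limit-distance}, namely the minimum of the $g_0$-length distance on $(0,\pi)\times S^{n-1}$ and the two polar shortcuts. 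Alternatively, this can be read off from $X_n$ being the completion, since $g_0$-paths through a pole cost exactly $r_x+r_y$ or $2\pi-r_x-r_y$.

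\emph{The inequality $\bar d\ge d_{X_n}$.} By \eqref{eq:frame-metric}, $h_0=\pi^*g_0+b(\omega_0,\omega_0)\ge\pi^*g_0$. Hence $p$ is $1$-Lipschitz on $h_0$-paths in $P^\circ$, so $d_{X_n}(p x,p y)\le d^\circ(x,y)$. The radial shortcut terms $r_x+r_y$ and $2\pi-r_x-r_y$ in $d_Y$ coincide with the corresponding polar shortcuts in $X_n$. Distances involving the vertices are $r_x$ and $\pi-r_x$ on both sides. Thus $d_{X_n}(px,py)\le d_Y(x,y')$ for every $y'$ in the orbit of $y$, and taking the infimum gives the inequality.

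\emph{The inequality $\bar d\le d_{X_n}$.} Given $x$ and $y$ in the regular part and a $g_0$-path $\gamma$ in $(0,\pi)\times S^{n-1}$ from $px$ to $py$, take the $h_0$-horizontal lift starting at $x$. This lift exists because the connection form is smooth on $P^\circ$. It has the same length as $\gamma$, and it ends at some $yQ$. Hence $\inf_Q d^\circ(x,yQ)\le d_{g_0}(px,py)$, which is the intrinsic distance on the open annulus. The shortcut terms of $d_Y$ are orbit-independent, so they match the shortcuts in $X_n$ exactly. Taking minima gives $\bar d\le d_{X_n}$ on regular points. The vertex cases are immediate.

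The main obstacle I expect is the identification of $d_{X_n}$ as the minimum of the annular intrinsic distance and the two polar shortcuts. A path in $X_n$ may pass through a pole several times, or touch it in more complicated ways. I would handle this exactly as in the proof of Proposition~\ref{prop:limit-topology}. Any path meeting a pole can be cut at its first and last visits, and its cost is then bounded below by the corresponding radial shortcut. Paths visiting both poles cost at least the same bounds as in that proof. Paths avoiding the poles are $g_0$-paths in the annulus.

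Finally, the bijection is an isometry between compact spaces, so it is automatically a homeomorphism. The projection is continuous because it is $1$-Lipschitz.
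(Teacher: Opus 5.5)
Your proposal is correct and follows essentially the same route as the paper. The projection is $1$-Lipschitz on $h_0$-paths; horizontal lifts of base paths give the reverse inequality once you minimize over the terminal fiber; and the orbit-independent radial shortcuts match the polar terms of the completed spindle distance. Your extra step identifying $d_{X_n}$ with $\min\{d_{X_n^\circ},\,r+s,\,2\pi-r-s\}$ fills in a point the paper only asserts. On $X_n$ the shortcut terms are in fact redundant, because the level spheres of radius $\tfrac12\sin r$ shrink to points.
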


\begin{proof}
On the regular part, projection does not increase \(d^\circ\). In the
opposite direction, every piecewise smooth base path has a horizontal
lift from any prescribed initial frame, and the lift has the same
length.
Passing to the quotient over the terminal frame fiber removes any dependence on the endpoint frame.
Thus the quotient of \(d^\circ\) is precisely
the intrinsic distance on the punctured spindle. The two radial
shortcut terms in \eqref{eq:limit-distance} are unchanged by the group
action. Consequently the quotient distance is
\[
 \min\left\{
 d_{X_n^\circ}((r,u),(s,v)),\ r+s,\ 2\pi-r-s
 \right\},
\]
which is exactly the completed spindle distance.
The two suspension vertices map to the corresponding poles of the spindle.
\end{proof}

\section{Local homology and the proof of Theorem \ref{thm:sphere}}

For a compact space \(L\), write
\[
 C^\circ(L)=([0,1)\times L)/(\{0\}\times L)
\]
for the open cone with its vertex included.
For the basic algebraic-topological notions and results used
in this section, see \cite{Hat}.

\begin{proposition}\label{prop:local-homology}
The regular part of \(Y_n\) is an \(m\)-manifold. Each vertex has a
neighborhood homeomorphic to \(C^\circ(L_n)\), and its local homology is
as in \eqref{eq:main-homology}. In particular, neither vertex is a
manifold point, either in the interior or on the boundary.
\end{proposition}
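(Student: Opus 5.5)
Three things need checking: that the regular part is a manifold, that each vertex has a cone neighborhood, and that the local homology has the stated form. We then use the local homology to rule out manifold points.

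\textbf{Regular part and cone neighborhoods.} By Proposition~\ref{prop:limit-topology}, the regular part is \(P^\circ=(0,\pi)\times L_n\). This is a smooth manifold of dimension \(1+(n-1)+\dim\SO(n)=m\), and the metric on it is locally the Riemannian metric \(h_0\). For \(0<\delta<\pi/2\), the same proposition gives
\[
 B(v_N,\delta)=\{v_N\}\cup\bigl((0,\delta)\times L_n\bigr).
\]
The map \((t,x)\mapsto(\delta t,x)\), with the vertex sent to \(v_N\), is a bijection \(C^\circ(L_n)\to B(v_N,\delta)\). It is a homeomorphism because the neighborhoods of the vertex on both sides are the sets \(\{r<\delta'\}\). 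The same holds at \(v_S\) with \(\pi-r\) in place of \(r\).

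\textbf{Local homology at a vertex.} By excision we may replace \(Y_n\) by the open cone. The cone is contractible, and the cone minus its vertex deformation retracts onto \(L_n\). The long exact sequence of the pair then gives, for \(k\ge 1\),
\[
 H_k(Y_n,Y_n\setminus\{v\})\cong\widetilde H_{k-1}(L_n).
\]
Next compute \(H_{n-1}(S^{n-1}\times\SO(n))\) and \(H_{m-1}(S^{n-1}\times\SO(n))\) with the Künneth formula. The sphere has free homology, so the Tor terms vanish and
\[
 H_j(L_n)\cong H_j(\SO(n))\oplus H_{j-n+1}(\SO(n)).
\]
\begin{itemize}
\item For \(j=n-1\) this is \(H_{n-1}(\SO(n))\oplus H_0(\SO(n))=H_{n-1}(\SO(n))\oplus\Z\).
\item For \(j=m-1\), note that \(\SO(n)\) is a closed orientable manifold of dimension \(m-n\). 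Hence \(H_{m-1}(\SO(n))=0\) and \(H_{m-n}(\SO(n))=\Z\), so \(H_{m-1}(L_n)\cong\Z\).
\end{itemize}
For \(n=2\) we have \(n-1=1\ge1\), so the reduced homology agrees with the unreduced homology there. This covers every \(n\ge2\).

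\textbf{No manifold points.} Local homology at a point is a topological invariant. At an interior point of an \(N\)-manifold it is \(\Z\) in degree \(N\) and zero in all other degrees. At a boundary point it vanishes in every degree. At a vertex both \(H_n\) and \(H_m\) are nonzero, and \(n<m\) because \(m-n=\dim\SO(n)\ge1\). So the local homology is nonzero in two distinct degrees, and the vertex can be neither an interior point nor a boundary point of a manifold of any dimension. In particular, the extra \(\Z\) in degree \(n\) already excludes the manifold structure. The main subtlety is just this step: the extra \(\Z\) coming from \(H_0(\SO(n))\) is always present, so no knowledge of \(H_{n-1}(\SO(n))\) is needed.
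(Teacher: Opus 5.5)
Your proof is correct and follows essentially the same route as the paper: you identify the vertex neighborhoods as open cones on $L_n$, reduce the local homology to $\widetilde H_{*-1}(L_n)$, apply Künneth using that $H_*(S^{n-1})$ is free, and conclude from the nonvanishing in the two distinct degrees $n<m$. The only minor difference is that you get $H_{m-1}(L_n)\cong\Z$ directly from Künneth, via $H_{m-1}(\SO(n))=0$ and $H_{m-n}(\SO(n))\cong\Z$, whereas the paper uses that $L_n$ is a closed, connected, orientable $(m-1)$-manifold; both arguments are fine.
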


\begin{proof}
The regular dimension is
\[
 1+(n-1)+\frac{n(n-1)}2=\frac{n(n+1)}2=m.
\]
The suspension description gives
\[
 H_k(Y_n,Y_n\setminus\{v\};\Z)
 \cong\widetilde H_{k-1}(L_n;\Z).
\]
Because the homology of \(S^{n-1}\) is free, the K\"unneth formula yields
\[
 H_{n-1}(L_n;\Z)
 \cong H_{n-1}(\SO(n);\Z)\oplus\Z.
\]
The connected Lie group \(\SO(n)\) is orientable, so the connected
closed \((m-1)\)-manifold \(L_n\) is orientable and
\[
 H_{m-1}(L_n;\Z)\cong\Z.
\]
This proves \eqref{eq:main-homology}. Since \(n<m\), local homology is
nonzero in two distinct degrees, whereas an interior point of an
\(m\)-manifold has local homology only in degree \(m\). A boundary point
has vanishing local homology, as in
\[
 H_*(\R^m_{\ge0},\R^m_{\ge0}\setminus\{0\};\Z)=0.
\]
Thus the vertices are not manifold points.
\end{proof}

For \(n=2\), \(L_2\cong T^2\), \(m=3\), and the two relevant local
groups are \(\Z^2\) in degree two and \(\Z\) in degree three.

\begin{proof}[Proof of Theorem~\ref{thm:sphere}]
Proposition~\ref{prop:base-geometry} proves the geometric bounds and
Proposition~\ref{prop:base-limit} proves the base convergence.
Moreover, the convergence of frame bundles, the suspension topology, and the metric quotient are obtained in
Theorem~\ref{thm:frame-convergence} and
Propositions~\ref{prop:limit-topology} and~\ref{prop:spherical-quotient}, respectively.
Finally, Proposition~\ref{prop:local-homology} implies the remaining topological
assertions.
\end{proof}

\section{The projective space quotients}

Let \(A(r,u)=(\pi-r,-u)\). In the trivialization
\eqref{eq:trivialization},
\begin{equation}\label{eq:antipodal-frame}
 dA\circ\Phi_{\eps,r,u}
 =\Phi_{\eps,\pi-r,-u}\circ J_u,
 \qquad J_u=2uu^T-I.
\end{equation}
Here \(J_u\) is \(+1\) on \(\R u\) and \(-1\) on \(u^\perp\), so
\[
 \det J_u=(-1)^{n-1}.
\]
For even \(n\), the natural lift \(dA\) interchanges positive and
negative frames. If an involution of the oriented frame bundle is
desired, fix an orthogonal reflection \(R\) with \(R^2=I\) and define
\[
 T_\eps(p)=dA_{\pi_\eps(p)}\circ p\circ R,
 \qquad
 T(r,u,Q)=(\pi-r,-u,J_uQR).
\]
Both are isometric involutions; the latter extends to \(Y_n\) by
exchanging the vertices. They normalize the right \(\SO(n)\) action:
\[
 T_\eps(pB)=T_\eps(p)(RBR),\qquad B\in\SO(n),
\]
and the analogous identity holds for \(T\). In particular, they need
not commute with that action. The cap-collapse approximation maps
constructed in Theorem~\ref{thm:frame-convergence} intertwine
\(T_\eps\) and \(T\).

\begin{proof}[Proof of Theorem~\ref{thm:projective}]
If \(n\) is even, the antipodal map reverses orientation. Every frame
on \(\mathbb{RP}^n\) has two lifts to \(S^n\), exactly one of which is
positive. The covering differential therefore gives an isometric
diffeomorphism
\[
 F^+(S^n,\widehat g_\eps)
 \longrightarrow F(\mathbb{RP}^n,g_\eps).
\]
Thus the connected full frame bundle has limit \(Y_n\). Under this
identification, the right \(\OO(n)\) action is given on the regular part by
\[
 (r,u,Q)\cdot B=
 \begin{cases}
 (r,u,QB),&\det B=1,\\
 (\pi-r,-u,J_uQB),&\det B=-1.
 \end{cases}
\]
This formula also defines the isometric right action on \(Y_n\).
Determinant-negative elements exchange the two vertices, while the
vertex stabilizer is \(\SO(n)\). The approximation maps commute with
this action. Since the nonidentity coset in \(\OO(n)/\SO(n)\) induces
the antipodal map on \(X_n\), Proposition~\ref{prop:spherical-quotient}
also gives
\[
 Y_n/\OO(n)\cong X_n/\langle A\rangle.
\]
In particular, the full limiting fiber over the identified spindle pole
contains two distinct vertices, at distance \(\pi\); it is not collapsed
to one point.

If \(n\) is odd, the antipodal map preserves orientation and
\[
 F^+(\mathbb{RP}^n,g_\eps)
 \cong F^+(S^n,\widehat g_\eps)/\langle dA\rangle.
\]
The induced limit involution is
\[
 \tau(r,u,Q)=(\pi-r,-u,J_uQ).
\]
It is free: on the regular part a fixed point would require
\(u=-u\), and the two vertices are exchanged. Equivariant convergence
and passage to the quotient give convergence to
\(Z_n=Y_n/\langle\tau\rangle\). Two disjoint small cone neighborhoods
of the vertices are exchanged, so the quotient of either one maps
homeomorphically to a neighborhood of the single identified point.
Its local link is still \(L_n\), and Proposition~\ref{prop:local-homology}
shows that it is not a manifold point. No other point is singular.
The two components of the full frame bundle are isometric by right
multiplication with a reflection, proving the componentwise statement.
Finally, the right \(\SO(n)\) action commutes with \(\tau\), and
Proposition~\ref{prop:spherical-quotient} gives the claimed projective space
base quotient.
\end{proof}

\section{Normalization}

\begin{corollary}[Prescribed diameter examples]\label{cor:normalization}
For every \(D>0\) and every \(n\ge2\), the metrics $\widehat g_\eps$ and $g_\eps$
in Theorems~\ref{thm:sphere} and~\ref{thm:projective} may also be rescaled so
that
\[
 \sec\ge1,
 \qquad \diam\le D,
 \qquad \inf_\eps\Vol > v(n,D)>0,
\]
while all topological conclusions about the frame bundle limits remain valid.
Moreover, after this normalization,
\[
 \Ric_{g_\eps}\ge(n-1)g_\eps.
\]
In particular, in
every even dimension $n\ge 2$ the connected full orthonormal frame bundles of a noncollapsing
sequence of Riemannian $n$-manifolds $M_i$ satisfying $\sec (M_i) \ge -1$ and $\diam (M_i) \le D$
can have a nonmanifold limit.
\end{corollary}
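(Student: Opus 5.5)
The plan is to rescale the metrics by a fixed constant and use that Gromov--Hausdorff limits, and all topological conclusions, are unchanged by a uniform rescaling, up to a compatible rescaling of the fiber metric.

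\textbf{Scaling the base.} For $\lambda>0$, set $\widetilde g_\eps=\lambda^2\widehat g_\eps$. Then
\[
 \sec_{\widetilde g_\eps}\ge\frac{1}{\pi\lambda^2},\qquad \diam=\lambda\pi,\qquad \Vol=\lambda^n\Vol(S^n,\widehat g_\eps)\ge\lambda^n2^{-(n-1)}\Vol(S^n,g_{\rm round}).
\]
These bounds come from Proposition~\ref{prop:base-geometry}. To obtain $\sec\ge1$ we need $\lambda\le\pi^{-1/2}$. To obtain $\diam\le D$ we need $\lambda\le D/\pi$. So take $\lambda=\min\{\pi^{-1/2},D/\pi\}$. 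The volume bound then depends only on $n$ and $D$, giving $v(n,D)$. The same $\lambda$ works on $\mathbb{RP}^n$, since the quotient metric $g_\eps$ is locally isometric to $\widehat g_\eps$ and has half the volume. Its diameter is at most that of $S^n$, so the bounds persist. From $\sec\ge1$ we get $\Ric\ge(n-1)g$ by summing sectional curvatures over an orthonormal basis.

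\textbf{Frame bundle limits.} This is the only point needing care. The connection form $\omega$ is scale invariant, so the canonical metric for $\lambda^2 g$ is
\[
 \lambda^2\pi^*g+b(\omega,\omega),
\]
which is not $\lambda^2 h_g$. I would handle this in one of two ways. The first is to observe that the whole argument goes through verbatim: Lemma~\ref{lem:cap} now gives loops of length $\lambda K_n\eps$, and the middle-annulus comparison $h_0\le h_\eps\le(1+C\eps/\delta)^2h_0$ still holds, since only horizontal coefficients change. One therefore gets convergence to the analogous space $Y_n^\lambda$, built from $f_0^\lambda$ and distances scaled radially by $\lambda$. The second is to note that the rescaled frame bundle is isometric to the frame bundle of $\widehat g_\eps$ with fiber metric $\lambda^{-2}b$, scaled by $\lambda^2$. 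Either way, $Y_n^\lambda$ is homeomorphic to $\Sigma L_n$ by the same proof as Proposition~\ref{prop:limit-topology}. Its local homology at the vertices is given by Proposition~\ref{prop:local-homology}, which depends only on the cone-neighborhood structure, and the quotient statements carry over. So the nonmanifold conclusions persist, and the $\tau$-quotient statement for odd $n$ persists as well.

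\textbf{Final claim.} For the last sentence, take $n$ even and $M_i=(\mathbb{RP}^n,g_{\eps_i})$ with $\eps_i\to0$, rescaled as above. Then $\sec\ge1\ge-1$, $\diam\le D$, and the volume is bounded below, so the sequence is noncollapsing. By Theorem~\ref{thm:projective}, the connected full frame bundles converge to a space homeomorphic to $\Sigma L_n$, which is not a manifold.

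I expect the main obstacle to be the bookkeeping just described: the canonical metric does not scale homogeneously. The key point is that the short-loop holonomy argument is insensitive to the horizontal scale.
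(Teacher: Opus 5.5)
Your proposal is correct and follows essentially the same route as the paper. It uses the same $\lambda=\min\{D/\pi,\pi^{-1/2}\}$, and it makes the same key observation: constant rescaling leaves the Levi--Civita connection unchanged, so only horizontal lengths scale by $\lambda$ while the vertical metric $b$ stays fixed, and the cap-contraction and annular comparison arguments, and hence all topological conclusions, go through unchanged.

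The only point to tidy is the strict inequality $\inf_\eps\Vol>v(n,D)$. The common lower bound for the sphere and projective families is $2^{-n}\lambda^n\Vol(S^n,g_{\rm round})$, and the paper secures strictness by taking $v(n,D)=2^{-(n+1)}\lambda^n\Vol(S^n,g_{\rm round})$, which is half of it.
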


\begin{proof}
Choose
\[
 \lambda=\min\left\{\frac D\pi,\frac1{\sqrt\pi}\right\}>0
\]
and replace every base metric by \(\lambda^2g_\eps\). Then
\[
 \sec_{\lambda^2g_\eps}
 =\lambda^{-2}\sec_{g_\eps}\ge1,
 \qquad
 \diam(\lambda^2g_\eps)=\lambda\diam(g_\eps)\le D.
\]
The volume lower bound is multiplied by \(\lambda^n\), and passage
from the sphere to its projective quotient divides volume by two. Thus
a common bound for both families after rescaling is
\[
 \inf_\eps\Vol\ge
 2^{-n}\lambda^n\Vol(S^n,g_{\rm round}),
\]
so we may take
\[
 v(n,D)=2^{-(n+1)}\lambda^n\Vol(S^n,g_{\rm round})>0.
\]
Constant rescaling does not change the Levi--Civita
connection. It multiplies horizontal lengths by \(\lambda\) and leaves
the fixed vertical metric unchanged, so the same contraction and
annular comparison arguments apply and the limit topology is
unchanged. The Ricci estimate follows from \(\sec\ge1\).
\end{proof}

\section{The intrinsic completion of the regular limit}

The ambient GH limit need not equal the intrinsic completion of the
regular limiting frame metric. The answer depends on the dimension.

\subsection{Dimension two}

Write \(u=e^{i\theta}\) and introduce the adjusted fiber angle
\(\varphi\) by
\[
 Q=e^{i(\varphi+\theta)}.
\]
Then
\begin{equation}\label{eq:surface-limit-metric}
 h_0
 =dr^2+\frac14\sin^2r\,d\theta^2
 +\left(d\varphi+\frac12\cos r\,d\theta\right)^2.
\end{equation}

\begin{proposition}\label{prop:surface-completion}
For \(n=2\), the intrinsic metric completion of
\((P^\circ,h_0)\) is isometric to the round
lens space
\[
 M=S^3(2)/\Gamma,
 \qquad
 \Gamma=\langle g\rangle\cong\mathbb Z_4,
 \qquad
 g(z_1,z_2)=(iz_1,-iz_2),
\]
where
\[
 S^3(2)=
 \{(z_1,z_2)\in\mathbb C^2:|z_1|^2+|z_2|^2=4\}.
\]
The completion adds two disjoint embedded closed geodesics
\[
 C_+=\{z_2=0\}/\Gamma,
 \qquad
 C_-=\{z_1=0\}/\Gamma,
\]
corresponding respectively to $r\downarrow0$
and $r\uparrow\pi$. They are parametrized by
\[
 \Psi_+=2\varphi+\theta\pmod{2\pi},
 \qquad
 \Psi_-=2\varphi-\theta\pmod{2\pi},
\]
respectively, and each has induced metric
$\frac14\,d\Psi_\pm^2$ and length $\pi$.
\end{proposition}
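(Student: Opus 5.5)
The plan is to write down an explicit isometric embedding of \((P^\circ,h_0)\) into the round lens space as the complement of two circles, and then identify the completion. Use Hopf-type coordinates on \(S^3(2)\):
\[
 z_1=2\cos(r/2)\,e^{i\alpha},\qquad z_2=2\sin(r/2)\,e^{i\beta},\qquad r\in(0,\pi).
\]
In these coordinates the round metric is \(dr^2+4\cos^2(r/2)\,d\alpha^2+4\sin^2(r/2)\,d\beta^2\). I will look for a linear substitution \(\alpha=a\varphi+b\theta\), \(\beta=c\varphi+d\theta\) that matches \eqref{eq:surface-limit-metric}.

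Matching coefficients gives three conditions:
\begin{itemize}
 \item \(4(a^2\cos^2+c^2\sin^2)=1\) for the \(d\varphi^2\) term, forcing \(a=c=\tfrac12\);
 \item \(4(b^2\cos^2+d^2\sin^2)=\tfrac14\) for the \(d\theta^2\) term;
 \item \(8(ab\cos^2+cd\sin^2)=\cos r=\cos^2(r/2)-\sin^2(r/2)\) for the cross term.
\end{itemize}
These determine \(b=\tfrac14\) and \(d=-\tfrac14\). Hence
\[
 \alpha=\tfrac12\varphi+\tfrac14\theta=\tfrac14\Psi_+,\qquad \beta=\tfrac12\varphi-\tfrac14\theta=\tfrac14\Psi_-,
\]
and \(h_0\) is exactly the pullback of the round metric of radius \(2\). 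The map is therefore a local isometry from \(P^\circ\) into \(S^3(2)\setminus\{z_1z_2=0\}\), well defined on the universal cover in the \((\theta,\varphi)\) variables.

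Next I determine the periods. The point \((r,u,Q)\) with \(u=e^{i\theta}\) and \(Q=e^{i(\varphi+\theta)}\) is unchanged under two generating shifts.
\begin{itemize}
 \item \((\theta,\varphi)\mapsto(\theta+2\pi,\varphi-2\pi)\) shifts \((\alpha,\beta)\) by \((-\pi/2,-3\pi/2)\equiv(-\pi/2,\pi/2)\). This is the map \((z_1,z_2)\mapsto(-iz_1,iz_2)=g^{-1}\).
 \item \((\theta,\varphi)\mapsto(\theta,\varphi+2\pi)\) shifts \((\alpha,\beta)\) by \((\pi,\pi)\), which is \(g^2=-I\).
\end{itemize}
So the deck lattice maps exactly onto \(\Gamma=\langle g\rangle\cong\Z_4\). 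Since the map from the torus \(L_2\) to the \((\alpha,\beta)\) torus modulo \(\Gamma\) is a bijective covering, the induced map \(P^\circ\to (S^3(2)\setminus\{z_1z_2=0\})/\Gamma\) is an isometric diffeomorphism of Riemannian manifolds. I will also check that \(\Gamma\) acts freely. The element \(g^2=-I\) has no fixed points. A fixed point of \(g\) or \(g^3\) would need \(iz_1=z_1\) and \(-iz_2=z_2\), hence \(z_1=z_2=0\), which is impossible. So \(M\) is a smooth round lens space.

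To pass to completions, I need the intrinsic metric of the open set \(U=M\setminus(C_+\cup C_-)\) to coincide with the restriction of \(d_M\). The set \(C_+\cup C_-\) is a union of embedded closed curves, of codimension two in a \(3\)-manifold. Any \(M\)-geodesic between points of \(U\) can be perturbed to avoid it, at arbitrarily small extra length, by pushing off transversally in a small tubular neighborhood. Then \(U\) is dense in the compact space \(M\), and its intrinsic completion is \(M\). This codimension-two perturbation is the only genuinely non-computational step, and I expect it to be the main, though standard, obstacle.

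Finally, the added set is \(\{r=0\}\cup\{r=\pi\}\), that is, \(C_+=\{z_2=0\}/\Gamma\) and \(C_-=\{z_1=0\}/\Gamma\). Along \(C_+\) the parameter is \(\alpha=\Psi_+/4\) and the metric is \(4\,d\alpha^2=\tfrac14 d\Psi_+^2\); along \(C_-\) one uses \(\beta=\Psi_-/4\) in the same way. Each is a great circle of length \(4\pi\) in \(S^3(2)\), hence a closed geodesic. \(\Gamma\) acts on it freely by rotation through a quarter turn, so the image has length \(\pi\). This is consistent with \(\Psi_\pm\) being defined modulo \(2\pi\).
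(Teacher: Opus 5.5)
Your proposal is correct and follows the paper's proof essentially step for step. You use the same map $\widetilde F(r,\theta,\varphi)=(2\cos(r/2)e^{i\alpha_+},2\sin(r/2)e^{i\alpha_-})$ (derived by coefficient matching rather than stated), the same period computation identifying $\Gamma$, the same codimension-two transversality argument for $d_U=d_M|_U$, and the same description of $C_\pm$ as quarter-turn quotients of great circles of length $4\pi$.

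The one step you assert rather than check is injectivity of the descended map, namely that $\widetilde F$ identifies no more points than the deck lattice does. The paper verifies this with the inverse formulas $\varphi=\alpha_++\alpha_-$ and $\theta=2(\alpha_+-\alpha_-)$, which are unchanged mod $2\pi$ under $2\pi$-shifts of either phase and under $g$. A covolume count, $\tfrac14(2\pi)^2=(2\pi)^2/|\Gamma|$, would serve equally well.
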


\begin{proof}
For $(r,\theta,\varphi) \in (0,\pi)\times\mathbb R^2$, define
\[
 \alpha_+=\frac{\varphi}{2}+\frac{\theta}{4},
 \qquad
 \alpha_-=\frac{\varphi}{2}-\frac{\theta}{4},
\]
and $\widetilde F: (0,\pi)\times\mathbb R^2 \rightarrow S^3(2)$ by
\[
 \widetilde F(r,\theta,\varphi)
 =
 \left(
 2\cos(r/2)e^{i\alpha_+},
 2\sin(r/2)e^{i\alpha_-}
 \right)\in S^3(2).
\]
A direct calculation shows that $\widetilde F^*g_{S^3(2)}=h_0$.

The angular periods determine the quotient precisely:
\[
 \widetilde F(r,\theta+2\pi,\varphi)
 =g\,\widetilde F(r,\theta,\varphi),
 \qquad
 \widetilde F(r,\theta,\varphi+2\pi)
 =g^2\,\widetilde F(r,\theta,\varphi).
\]
Thus $\widetilde F$ descends to a local isometry
\[
 F:P^\circ\longrightarrow M,
\]
with image
\[
 U=M\setminus(C_+\cup C_-),
\]
where
\[
 C_+=\{z_2=0\}/\Gamma,
 \qquad
 C_-=\{z_1=0\}/\Gamma.
\]
Moreover, $F$ is a diffeomorphism onto $U$. Indeed, away from
the coordinate circles, $r$ is determined by
\[
 |z_1|=2\cos(r/2),\qquad |z_2|=2\sin(r/2),
\]
and the inverse angular formulas are
\[
 \varphi=\alpha_++\alpha_-,\qquad \theta=2(\alpha_+-\alpha_-)\pmod{2\pi}.
\]
Changing either phase by $2\pi$, or replacing $(z_1,z_2)$ by
$g(z_1,z_2)$, leaves these inverse coordinates unchanged
modulo $2\pi$.

The group $\Gamma$ has order four and acts freely on $S^3(2)$.
Consequently $M$, with its quotient Riemannian metric, is a smooth
closed three-manifold of constant sectional curvature $1/4$.
Moreover, the intrinsic distance $d_U$ on $U$ agrees with the
restriction of the Riemannian distance $d_M$. To see this, join two
points of $U$ by a smooth path in $M$ whose length is arbitrarily close
to their $d_M$-distance. Since $C_+$ and $C_-$ are smooth embedded
circles of codimension two, this path can be perturbed, with endpoints
fixed and arbitrarily small change in length, to be transverse to both
circles. Such a transverse path misses both circles, since
$1+1<3$. Hence $d_U\le d_M$ on $U$; the reverse inequality is
immediate.
Since $U$ is dense in the complete space $M$, the map $F$
extends uniquely to an isometry from the metric completion
of $(P^\circ,d^\circ)$ onto $M$.

At the northern end, the surviving phase is $\alpha_+$ modulo
$\pi/2$, because the action of $g$ sends $\alpha_+$ to $\alpha_++\pi/2$.
Thus the coordinate on $C_+$ is
\[
 4\alpha_+=2\varphi+\theta=\Psi_+\pmod{2\pi}.
\]
Similarly, the coordinate on $C_-$ is
\[
 4\alpha_-=2\varphi-\theta=\Psi_-\pmod{2\pi}.
\]
The coordinate circles in $S^3(2)$ are great circles of
length $4\pi$. Their images in the quotient are embedded
closed geodesics of length $\pi$. Equivalently, their induced
metrics are
\[
 4\,d\alpha_+^2=\frac14\,d\Psi_+^2,
 \qquad
 4\,d\alpha_-^2=\frac14\,d\Psi_-^2.
\]
This identifies the two added circles and their metrics.
\end{proof}
\begin{remark}
For $x\in P^\circ$ with radial coordinate $r_x$, we have
\[
 d_M(x,C_+)=r_x,\qquad
 d_M(x,C_-)=\pi-r_x,\qquad
 d_M(C_+,C_-)=\pi.
\]
Contracting $C_+$ and $C_-$ separately therefore gives,
for regular points $x,y$, the quotient length distance
\[
 \min\left\{
 d_M(x,y),\ r_x+r_y,\ 2\pi-r_x-r_y
 \right\}.
\]
Since $d_M=d^\circ$ on the regular part, this is precisely
the defining distance of $Y_2$.

A small tubular neighborhood of either circle is a solid
torus. Contracting its core circle produces a neighborhood
homeomorphic to a cone on $\mathbb T^2$, so the corresponding
point of $Y_2$ is not a topological three-manifold point.
In contrast, $M$ is smooth everywhere. Thus $Y_2$ is not
isometric to the intrinsic completion.
\end{remark}

\subsection{Dimensions at least three}

For the angular Euclidean bundle
\(\underline\R\oplus TS^{n-1}\), the limiting connection at a fixed
level is
\begin{equation}\label{eq:angular-connection}
 D^\alpha_X(s,V)
 =\left(
 Xs-\alpha\langle X,V\rangle,
 \nabla_X^{S^{n-1}}V+\alpha sX
 \right),
 \qquad \alpha=\frac12\cos r.
\end{equation}
At the north end, \(\alpha=1/2\).

\begin{proposition}\label{prop:higher-completion}
For \(n\ge3\), the intrinsic metric completion of
\((P^\circ,h_0)\) is \(Y_n\). More precisely, every level
\(\{r\}\times L_n\) has intrinsic diameter tending to zero as
\(r\downarrow0\), and similarly as \(r\uparrow\pi\).
\end{proposition}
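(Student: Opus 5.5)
The plan is to reduce everything to one geometric fact: the intrinsic diameter of the level sets $\{s\}\times L_n$ of $(P^\circ,h_0)$ tends to zero as $s\downarrow0$, and likewise as $s\uparrow\pi$ by the reflection symmetry. The identification of the completion with $Y_n$ is then formal. Indeed, the radial curves $t\mapsto(t,u,Q)$ are $h_0$-unit speed, because $\partial_r$ is horizontal for \eqref{eq:frame-metric}: the connection form \eqref{eq:connection-form} has no $dr$ term. Going radially from $x$ down to level $s$, crossing that level, and going radially up to $y$ gives
\[
 d^\circ(x,y)\le r_x+r_y-2s+\diam(\{s\}\times L_n),
\]
and letting $s\downarrow0$ yields $d^\circ(x,y)\le r_x+r_y$. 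Symmetrically, $d^\circ(x,y)\le 2\pi-r_x-r_y$. Hence $d^\circ=d_Y$ on $P^\circ$ by \eqref{eq:limit-distance}.

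Since $r$ is $1$-Lipschitz for $d^\circ$, a non-convergent Cauchy sequence must have $r\to0$ or $r\to\pi$. Any two such sequences with $r\to0$ are equivalent, by the bound just proved. So the completion adds exactly one point at each end, at distance $r_x$, respectively $\pi-r_x$, from $x$. This matches $v_N$ and $v_S$ in Proposition~\ref{prop:limit-topology}, and the resulting isometry extends to the completion.

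For the diameter estimate, fix a level $s$ and put $\alpha=\frac12\cos s$. The level carries the metric $\frac14\sin^2 s\,g_{S^{n-1}}+b(\omega,\omega)$. The key observation is that a horizontal lift, for the connection $D^\alpha$ of \eqref{eq:angular-connection}, of a curve of $g_{S^{n-1}}$-length $\ell$ has $h_0$-length $\frac12\sin s\,\ell$. So it suffices to prove the following. There is a constant $K$, uniform for $\alpha$ in a neighbourhood of $1/2$, such that every element of $\SO(n)$ is the $D^\alpha$-holonomy of a loop in $S^{n-1}$ of length at most $K$. Granting this, two points $(u,Q)$ and $(v,Q')$ of the level are joined by first lifting a minimizing arc from $u$ to $v$, of length at most $\pi$, and then lifting a loop at $v$ that corrects the frame. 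This gives
\[
 \diam(\{s\}\times L_n)\le\tfrac12\sin s\,(\pi+K)\to0 .
\]

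To get the uniform holonomy bound, I would first compute the curvature of $D^\alpha$. Since $\alpha$ is constant on the level, the mixed terms cancel (a Codazzi-type computation). What remains is $(1-\alpha^2)$ times the curvature of the round $S^{n-1}$, acting on the $TS^{n-1}$ factor. For $\alpha=1$ this recovers the flat connection of $\R^n$ restricted to the unit sphere, and for $0<\alpha<1$ it is nonzero. The curvature endomorphisms span $\mathfrak{so}(n-1)$ at each point $u$. Their parallel translates to a fixed point, equivalently their first $D^\alpha$-covariant derivatives, have mixed components proportional to $\alpha$. For $n\ge3$ these brackets generate all of $\mathfrak{so}(n)$, so by Ambrose--Singer the holonomy group is $\SO(n)$.

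This is the step I expect to be the main obstacle: the Lie-algebra generation argument, and the point where $n\ge3$ is essential. For $n=2$ the holonomy is only the single rotation through $2\pi(1-\alpha)$, consistent with Proposition~\ref{prop:surface-completion}.

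For uniformity I would argue by openness. Choose finitely many loops $\gamma_1,\dots,\gamma_k$, obtained from small families of lassos, such that the map from parameter space to $\SO(n)$ sending the parameters to the product of holonomies is a submersion at $\alpha=1/2$. By continuity in $\alpha$ it remains a submersion nearby, so its image contains a fixed neighbourhood $V$ of the identity for all $\alpha$ close to $1/2$. Compactness then gives $\SO(n)=V^N$ with $N$ fixed, and the loop lengths are bounded uniformly, which produces $K$.
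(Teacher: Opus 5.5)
Your proposal is correct. The reduction to collapse of the level diameters, and the identification of the completion by radial paths, are as in the paper; you also spell out the Cauchy-sequence step that the paper leaves implicit.

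The real difference is how you get \emph{uniform} collapse. The paper argues pointwise. For fixed $p,q\in L_n$ it relifts the projection of one $D^{1/2}$-horizontal curve with $D^{(1/2)\cos r}$, and absorbs the endpoint error vertically by continuous dependence on $\alpha$. It then upgrades $d_r(p,q)\to0$ to $\diam(L_n,d_r)\to0$ by a finite-net argument against a fixed metric $k$. This needs only continuity and compactness, but gives no rate. You instead prove a short-loop holonomy bound for $D^\alpha$, uniform in $\alpha$ near $1/2$ (a limiting analogue of Lemma~\ref{lem:cap}), via a parametrized inverse function argument. This costs more machinery but yields the linear rate $\diam(\{s\}\times L_n)\le\frac12(\pi+K)\sin s$.

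For $V$ to be genuinely independent of $\alpha$, let the lasso families degenerate at parameter $0$, so that $0\mapsto e$ for every $\alpha$. Then apply the inverse function theorem to $(\alpha,t)\mapsto(\alpha,\Pi_\alpha(t))$ at $(1/2,0)$. Independence of $K$ from the base point follows from the $\OO(n)$-invariance of $D^\alpha$.

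In the Lie-algebra step, ``equivalently'' should be ``in particular'': first covariant derivatives of $R$ lie in the holonomy algebra, which is all you need. Concretely, $(D_{e_1}R)(e_1,e_j)=-\alpha(1-\alpha^2)\,e_0\wedge e_j$ for $j\ge2$, and $(D_{e_2}R)(e_2,e_1)=-\alpha(1-\alpha^2)\,e_0\wedge e_1$. Together with $\mathfrak{so}(n-1)$ these span $\mathfrak{so}(n)$, and this is exactly where $n\ge3$ enters. The paper obtains the same mixed elements by explicit parallel transport along a great circle.
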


\begin{proof}
For a constant \(a\in(0,1)\), the curvature of \(D^a\) is
\[
 R^{D^a}(X,Y)(s,V)
 =\left(0,(1-a^2)
 (\langle V,Y\rangle X-\langle V,X\rangle Y)\right).
\]
Let $E=\underline{\mathbb R}\oplus TS^{n-1}$, and let
$e_0(u)=(1,0)$ denote the unit section of its trivial summand of .
Fix a base point $u_*\in S^{n-1}$ and choose an orthonormal basis
$e_1,\ldots,e_{n-1}$ of $T_{u_*}S^{n-1}$, identifying each $e_i$
with $(0,e_i)\in E_{u_*}$. Thus
$e_0(u_*),e_1,\ldots,e_{n-1}$ is an orthonormal basis of $E_{u_*}$.
For vectors $v,w$ in a fiber of $E$, write
\[
 (v\wedge w)\xi=\langle w,\xi\rangle v-\langle v,\xi\rangle w.
\]
Since $1-a^2\ne0$, the curvature endomorphisms at $u_*$
span all tangent rotations $e_i\wedge e_j$,
$1\le i<j\le n-1$. We
apply the Ambrose--Singer theorem (see \cite{KN}) to the restricted holonomy: its Lie
algebra is generated by curvature endomorphisms parallel transported to
the base point along piecewise smooth paths. Along a unit-speed
great-circle arc initially tangent to \(e_1\), let \(e_1(t)\) be its
tangent and extend \(e_j\), \(j\ge2\), by parallel transport in the
tangent bundle of \(S^{n-1}\). A \(D^a\)-parallel vector
\(s(t)e_0+v(t)e_1(t)\) satisfies
\[
 s'=av,\qquad v'=-as.
\]
If \(P_t\) denotes forward parallel transport, then
\[
 P_t^{-1}e_1(t)=-\sin(at)e_0+\cos(at)e_1,
 \qquad P_t^{-1}e_j(t)=e_j\quad(j\ge2).
\]
Transporting the tangent rotation \(e_1(t)\wedge e_2(t)\) back
therefore gives
\[
 (\cos(at)e_1-\sin(at)e_0)\wedge e_2.
\]
For \(\sin(at)\ne0\), subtracting
\(\cos(at)e_1\wedge e_2\) and dividing by \(-\sin(at)\) yields
\(e_0\wedge e_2\). Brackets with tangent rotations give every
\(e_0\wedge e_j\), and hence the holonomy Lie algebra is
\(\mathfrak{so}(n)\). The connection is metric, so restricted holonomy
lies in \(\SO(n)\); since it is connected, it equals \(\SO(n)\). The
full holonomy is a subgroup of \(\SO(n)\) containing the restricted
holonomy, and is therefore also \(\SO(n)\).

Full holonomy gives horizontal transitivity on the angular oriented
frame bundle \(L_n\). Let \(d_r\) be the intrinsic distance on the
level \(\{r\}\times L_n\), and fix \(p,q\in L_n\). Choose a piecewise
smooth \(D^{1/2}\)-horizontal curve from \(p\) to \(q\), with projected
curve \(\gamma\) in \(S^{n-1}\). Starting at \(p\), lift \(\gamma\)
horizontally using \(D^{(1/2)\cos r}\), and call the endpoint \(q_r\).
Continuous dependence of parallel transport gives \(q_r\to q\). The
lifted curve has length
\[
 f_0(r)\operatorname{length}_{S^{n-1}}(\gamma)\longrightarrow0.
\]
The points \(q_r,q\) lie in the same compact frame fiber and their
vertical distance tends to zero. Thus \(d_r(p,q)\to0\) for every fixed
pair.

We can upgrade this to uniform diameter collapse. Choose a fixed
Riemannian metric \(k\) on compact \(L_n\). For all sufficiently small
\(r\), the level quadratic forms are bounded above by \(C_nk\), and
hence
\[
 d_r(a,b)\le\sqrt{C_n}\,d_k(a,b).
\]
Given \(\eta>0\), choose a finite \(\eta\)-net
\(\{p_1,\dots,p_N\}\) in \((L_n,k)\). Pointwise convergence for the
finitely many pairs gives, for small \(r\),
\[
 d_r(p_i,p_j)<\eta
 \qquad(1\le i,j\le N).
\]
For arbitrary \(p,q\), choose \(p_i,p_j\) within \(k\)-distance
\(\eta\). Then
\[
 d_r(p,q)<2\sqrt{C_n}\eta+\eta.
\]
Letting \(\eta\downarrow0\) proves
\(\diam(L_n,d_r)\to0\). At the south end the limiting connection
is \(D^{-1/2}\). The orthogonal bundle map
\(C(s,V)=(-s,V)\) satisfies
\(D^{-a}_X(C(s,V))=C(D^a_X(s,V))\), so it conjugates the two
connections and their holonomy groups. Thus \(D^{-1/2}\) also has
full \(\SO(n)\) holonomy, and the same argument proves diameter
collapse as \(r\uparrow\pi\).

For \(x,y\in P^\circ\), descend radially to a northern level \(r=t\),
cross that level, and return radially. Then
\[
 d^\circ(x,y)
 \le(r_x-t)+(r_y-t)+\diam(L_n,d_t).
\]
Letting \(t\downarrow0\) gives \(d^\circ(x,y)\le r_x+r_y\). The
southern estimate similarly gives
\(d^\circ(x,y)\le2\pi-r_x-r_y\). Therefore the shortcut minimum
\eqref{eq:limit-distance} agrees with the intrinsic completion metric,
and the completion is \(Y_n\).
\end{proof}

\begin{remark}
The dimension-two discrepancy records holonomy created inside the
shrinking smoothing caps and invisible in the punctured limiting
metric. In dimensions \(n\ge3\), the limiting angular connection
already has full holonomy, so its intrinsic level diameters collapse.
\end{remark}

\section{On alternatives to Yamaguchi's original conjecture}


It remains to find meaningful alternatives to Yamaguchi's original conjecture. In view of the results of the present paper, it is, e.g., definitely not enough to replace topological manifolds by orbifolds. Instead, any substitute must inevitably take into account and control the holonomy phenomena caused by the possible presence of nontrivial short loops.

As a first step in this direction, let us present the following result. As before,
we use the canonical connection metrics, where the horizontal metric is the base metric, and the vertical metric is a fixed bi-invariant metric $b$ on $G = \mathrm{SO}(n)$.

\begin{theorem}[Uniformly small holonomy of short loops]\label{thm:small holonomy}
Let $(M_i, g_i)$ be closed, connected, oriented $n$-manifolds, $n \ge 2$, satisfying
\[
\sec_{g_i} \ge \kappa, \quad \diam(M_i, g_i) \le D, \quad \Vol(M_i, g_i) \ge v > 0.
\]
For piecewise smooth closed curves, define
\begin{equation}
\Omega(r) := \sup_i \sup_{\substack{x \in M_i,\; \gamma(0)=\gamma(1)=x \\ L_{g_i}(\gamma) \le r}} d_G(\Hol_\gamma, e).
\end{equation}
Here holonomy is represented in an oriented frame over $x$, and since $d_G$, the metric induced by the Riemannian metric $b$, is bi-invariant, its distance from the identity $e$ is independent of the chosen frame.

Then, under the assumption that $\Omega(r) \to 0$ as $r \downarrow 0$,
every Gromov--Hausdorff limit $Y$ of the oriented frame bundles $P_i = F^+(M_i, g_i)$ is a topological manifold of dimension $m = n(n+1)/2$. Moreover, after passing to an equivariantly convergent subsequence,
\[
Y \longrightarrow X = \lim_i (M_i, g_i)
\]
is a locally trivial topological principal $\mathrm{SO}(n)$-bundle.
\end{theorem}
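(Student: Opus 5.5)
Under the noncollapsing hypotheses, Perelman's Stability Theorem gives that $X$ is a topological $n$-manifold; more precisely, for large $i$ there are homeomorphisms $\phi_i\colon M_i\to X$ which are $\eta_i$-Gromov--Hausdorff approximations with $\eta_i\to0$. Pass to a subsequence so that $P_i\to Y$ equivariantly (Fukaya--Yamaguchi) with $Y/\SO(n)\cong X$. The aim is then to build, over each small ball of $X$, an equivariant homeomorphism $Y|_U\cong U\times\SO(n)$, and to read off both claims from it.

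\textbf{Step 1: holonomy-defined local sections.} Fix $\rho>0$ small compared with the contractibility and stability scales, and fix a base point $x_i\in M_i$ together with a frame $e_i$ over $x_i$. For $y\in B(x_i,\rho)$, define a ``pseudo-section'' $s_i(y)$ by parallel transport of $e_i$ along some piecewise smooth path from $x_i$ to $y$ of length at most $d(x_i,y)+\rho$. Two such paths differ by a loop of length at most $4\rho$, so by the hypothesis any two choices of $s_i(y)$ differ in the fiber by at most $\Omega(4\rho)$. Parallel transport is a horizontal lift, hence an isometric lift of lengths. For nearby $y,y'$ we concatenate paths to get
\[
d_{P_i}(s_i(y)Q,\,s_i(y')Q')\le d(y,y')+d_G(Q,Q')+\Omega(C\rho).
\]
For the reverse estimate we use the fact that $\pi_i$ is $1$-Lipschitz: a path from $s_i(y)Q$ to $s_i(y')Q'$ of length $\ell$ projects to a base path of length $\le\ell$. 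Lifting that projection horizontally and comparing with $s_i$ across a short loop shows
\[
d_G(Q,Q')\le \ell+\Omega(C\rho)+\ell,
\]
because the vertical displacement is controlled by the length. Hence $\Theta_i(y,Q)=s_i(y)Q$ is an almost-isometry, up to additive error $\Omega(C\rho)$, between $B(x_i,\rho)\times\SO(n)$ with the product-type metric and $\pi_i^{-1}B(x_i,\rho)$, uniformly in $i$.

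\textbf{Step 2: pass to the limit.} Composing with $\phi_i$ and letting $i\to\infty$, the maps $\Theta_i$ subconverge by an Arzel\`a--Ascoli argument for almost-Lipschitz maps to an $\SO(n)$-equivariant map
\[
\Theta\colon B(x,\rho)\times\SO(n)\to Y|_{B(x,\rho)}.
\]
This $\Theta$ is surjective, since every frame over $y$ is $s_i(y)Q$. It satisfies two-sided distance bounds up to the additive error $\Omega(C\rho)$. An additive error is not enough for injectivity, so we exploit the scale. Fibers of $Y\to X$ are orbits of a free action on the limit, and the orbit map $\SO(n)\to Y$, $Q\mapsto pQ$, is bi-Lipschitz at scales larger than $\Omega(\cdot)$ by the reverse estimate. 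Shrinking $\rho$ toward a point and applying the same estimate at every scale gives that $\Theta$ is injective and continuous. By compactness it is a homeomorphism onto a neighborhood.

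Concretely, injectivity needs the following: if $\Theta(y,Q)=\Theta(y',Q')$, then $y=y'$ (project to $X$), and then the reverse estimate applied on balls of radius $r\to0$ around $y$ gives $d_G(Q,Q')\le\Omega(Cr)\to0$. Independence from the choice of base point is handled by building $s_i$ around $y$ itself at each scale. The two-sided bounds then also show that the stabilizers in $Y$ are trivial, so the action of $\SO(n)$ on $Y$ is free.

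\textbf{Step 3: conclude.} Local triviality gives that $Y\to X$ is a principal $\SO(n)$-bundle in the topological category. Locally $Y$ is homeomorphic to (an $n$-manifold)$\times\SO(n)$, so it is a topological manifold of dimension $n+n(n-1)/2=m$.

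The main obstacle is Step 2. Making the additive-error almost-isometries produce a genuine homeomorphism requires the scale-by-scale uniformity of $\Omega$, and the continuity of the limit sections must be checked carefully. Perelman stability supplies the base chart that makes the sections well defined on contractible balls.
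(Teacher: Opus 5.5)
There is a genuine gap in Step 2, at the one point where the content of local triviality lies. For fixed $\rho$, your maps $\Theta_i$ satisfy their distance bounds only up to the additive error $\Omega(C\rho)$, and this error does not tend to zero as $i\to\infty$. An Arzel\`a--Ascoli type limit of such maps therefore inherits the same additive error and need not be continuous.

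This is not an artifact of your estimates. You define $s_i$ by parallel transport along arbitrarily chosen near-minimal paths from the fixed point $x_i$. For nearby $y,y'$, the comparison loop passes through $x_i$ and has length of order $\rho$, not of order $d(y,y')$. So the only control on the jump of $s_i$ between nearby points is $\Omega(C\rho)$, and nothing prevents such jumps, for instance across the cut locus of $x_i$, which no injectivity-radius bound keeps away.

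Re-centering at $y$ and shrinking the scale produces a \emph{different} section. It agrees with the original one only up to $\Omega(C\rho)$, so it says nothing about continuity of the $\Theta$ you already built. Injectivity, by contrast, is not the real issue. Once the limit action is free, \emph{any} set-theoretic choice $\sigma(y)\in\pi^{-1}(y)$ gives a bijection $(y,Q)\mapsto\sigma(y)Q$. The content of local triviality is the existence of a \emph{continuous} local section, and your argument does not produce one.

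What you do obtain correctly is freeness of the limit action, and this part matches the paper's argument. Suppose $pQ$ and $pQ'$ are joined by a curve of length $\ell$. Its projection is a closed loop of length at most $\ell$. Writing the curve as a horizontal lift times a vertical motion gives $d_G(Q,Q')\le\ell+\Omega(\ell)$, with no scale $\rho$ or section needed.

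The paper then finishes by citing Gleason's theorem: a free action of a compact Lie group on a metrizable space admits local cross-sections. That yields local triviality of $Y\to X$ directly, and with Perelman stability for $X$ your Step 3 goes through. So either replace Step 2 by Gleason's theorem (or the Mostow--Palais slice theorem), or give a genuine construction of a continuous section, e.g.\ by equivariant averaging over $G$. Your coarse estimates could help control such a construction, but they do not replace it.
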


\begin{proof}
Uniformly finite $r$-nets of the bases and of the fixed compact group $G$ yield $2r$-nets of $P_i$ by horizontal lifting. Moreover, $\diam P_i \le D + \diam G$. The bases are precompact by the usual compactness theorems, so the $P_i$ are precompact as well. For $p\in P_i$ and $A, B\in G$, the right actions satisfy
\[
d_{P_i}(pA, pB) \le d_G(A, B).
\]
After taking a subsequence, these actions converge to a continuous isometric $G$-action on $Y$, and metrically we have that $Y/G = X$.

Now suppose that $yh = y$ and $p_i \to y$. Then $d_{P_i}(p_i, p_i h) \to 0$. Choose connecting curves $c_i$ of lengths $\ell_i \to 0$. Their projections $\gamma_i$ are closed base curves with $L(\gamma_i) \le \ell_i$. Decomposing $c_i$ into a horizontal lift and a vertical group motion gives
\[
d_G(h, \Hol_{\gamma_i}) \le \ell_i, \quad d_G(h, e) \le \ell_i + \Omega(\ell_i) \longrightarrow 0.
\]
Thus $h = e$, and the limit action is free.

Finally, Gleason's theorem \cite{Gle} for free actions of compact Lie groups on metrizable spaces makes $\pi: Y \to X$ a locally trivial principal bundle. Noncollapse and Perelman's stability theorem \cite{Kap} imply that $X$ is a topological $n$-manifold. In a sufficiently small coordinate neighborhood \(U\subset X\), we have \(\pi^{-1}(U)\cong U\times G\). Thus, $Y$ is a topological $m$-manifold.
\end{proof}
\begin{remark}
The condition $\Omega(r)\to0$ as $r\downarrow0$ is equivalent to
the existence of a uniform constant $C<\infty$ such that
\begin{equation}\label{eq:linear estimate}
d_G(e,\Hol_{\gamma})\le CL_{g_i}(\gamma)
\end{equation}
for all $i$ and all based piecewise smooth loops $\gamma$.

Indeed, the linear estimate gives $\Omega(r)\le Cr$. Conversely,
choose $\delta>0$ with $2\delta<\operatorname{inj}(G,b)$ and
$r_0>0$ such that $\Omega(r_0)<\delta$.
For a loop of length $\ell\le r_0$, write
$h=\Hol_{\gamma}=\exp A$, where
$\theta=|A|=d_G(e,h)<\delta$.
If $\theta>0$, set $k=\lfloor\delta/\theta\rfloor+1$. Then
\[
\delta<d_G(e,h^k)=k\theta<2\delta.
\]
Since the $k$-fold traversal has length $k\ell$ and holonomy $h^k$,
we must have $k\ell>r_0$, whence
$\theta<2\delta\ell/r_0$.
For $\ell>r_0$, use $d_G(e,h)\le\operatorname{diam}(G,b)$.
Thus the estimate holds with
$C=r_0^{-1}\max\{2\delta,\operatorname{diam}(G,b)\}$.
\end{remark}

\begin{remark}
Under the standing assumptions
$\sec_{g_i}\ge\kappa$, $\diam(M_i,g_i)\le D$, and
$\Vol(M_i,g_i)\ge v>0$, each of the following conditions implies
$\Omega(r)\to0$ and also the linear estimate \eqref{eq:linear estimate}:
\begin{enumerate}
\item a uniform injectivity-radius bound
      $\operatorname{inj}(M_i,g_i)\ge i_0>0$;
\item a uniform conjugate-radius bound
      $\operatorname{conj}(M_i,g_i)\ge c_0>0$;
\item for some fixed $q>n$ and $\Lambda<\infty$,
\[
\int_{M_i}(\operatorname{Scal}_{g_i}^{+})^q\,
d\operatorname{Vol}_{g_i}\le\Lambda,
\qquad
\operatorname{Scal}^{+}=\max\{\operatorname{Scal},0\}.
\]
\end{enumerate}

Indeed, in (1), the sectional-curvature and injectivity-radius
bounds imply two-sided Hessian estimates for suitable distance
functions \cite[Theorem~3.5, p.~180]{PetersenSurvey}, yielding local orthonormal frames on balls of uniform
radius with uniformly bounded connection forms. In (2),
Bishop--Gromov comparison supplies uniform local noncollapse. Then the injectivity-radius estimate under a lower Ricci bound and
a positive conjugate-radius bound reduces this case to (1) (see \cite[Example~5.11, p.~191]{PetersenSurvey}). In (3), the sectional-curvature lower bound implies
\[
|\operatorname{Rm}_{g_i}|
\le C(n,\kappa)(1+\operatorname{Scal}_{g_i}^{+}).
\]
Together with the uniform upper volume bound, this leads to a uniform
$L^q$ curvature bound.
Local noncollapse and harmonic-coordinate estimates, by \cite[Theorem~5.4, p. 187]{PetersenSurvey}, then give
uniformly elliptic charts of uniform size with uniform $W^{2,q}$
bounds on the metric coefficients. Since $q>n$, Sobolev embedding
on smaller charts results in uniform bounds on their first derivatives,
and hence on the connection forms in the associated orthonormal
frames.

In all three cases, the parallel-transport equation yields
\eqref{eq:linear estimate} for sufficiently short loops. Meanwhile, the boundedness of
$d_G$ handles longer loops. Consequently, $\Omega(r)\le Cr$. The constant $C$ depends only on $n$, $\kappa$, $D$, $v$,
the fixed metric $b$, and the constants in the respective
additional condition.
\end{remark}
\clearpage

\begin{center}
\footnotesize{ACKNOWLEDGMENTS}
\end{center}
The first author is very grateful to Professor Burkhard Wilking for his enlightening lectures on advanced differential geometry and for suggesting the study of collapsing theory with lower curvature bounds. He also sincerely thanks Professor Wilking for his warm hospitality and generous help during his stay in M\"{u}nster.

\section*{AI Assistance Disclosure}
AI was used during the preparation of this manuscript
to assist with English-language editing, as well as
the organization of relevant literature. The authors take full
responsibility for the correctness and content of the manuscript.

\end{document}